\theoremstyle{plain}
\newtheorem{theorem}{Theorem}[section]
\newtheorem{proposition}[theorem]{Proposition}
\newtheorem{corollary}[theorem]{Corollary}
\newtheorem{lemma}[theorem]{Lemma}
\theoremstyle{definition}
\newtheorem{definition}[theorem]{Definition}
\newtheorem{remark}[theorem]{Remark}
\newcommand{\rn}[1]{{\mathbb R}^{#1}}
\newcommand{\R}{\mathbb R}
\newcommand{\he}[1]{{\mathbb H}^{#1}}
\newcommand{\covH}[1]{{\bigwedge\nolimits^{#1}{\mfrak h}}}
\newcommand{\vetH}[1]{{\bigwedge\nolimits_{#1}{\mfrak h}}}
\newcommand{\covh}[1]{{\bigwedge\nolimits^{#1}{\mfrak h_1}}}
\newcommand{\veth}[1]{{\bigwedge\nolimits_{#1}{\mfrak h_1}}}
\newcommand{\scal}[2]{\langle {#1} , {#2}\rangle}
\newcommand{\Scal}[2]{\langle {#1} \vert {#2}\rangle}
\newcommand{\scalp}[3]{\langle {#1} , {#2}\rangle_{#3}}
\newcommand{\ccheck}{{\vphantom i}^{\mathrm v}\!\,}
\newcommand{\mc}{\mathcal }
\newcommand{\mfrak}{\mathfrak}
\newcommand{\N}{\mathbb N}
\newcommand{\curl}{\mathrm{curl}\;}
\newcommand{\Hom}{\mathrm{Hom}\,}
\title[Gagliardo-Nirenberg inequalities in Heisenberg groups
] 
{Gagliardo-Nirenberg inequalities for differential forms in Heisenberg groups}
\author[Annalisa Baldi, Bruno Franchi, Pierre Pansu]{
Annalisa Baldi\\
Bruno Franchi\\ Pierre Pansu
}
\thanks{ \\
 A.B. and B.F. are supported by University of Bologna, funds for selected research 
topics, by GNAMPA of INdAM, Italy and by MAnET Marie Curie Initial Training Network. 
\\  P.P. is supported by Agence Nationale de la Recherche, ANR-10-BLAN 0116.
}
\begin{document}

%
\keywords{Heisenberg groups, differential forms, Gagliardo-Nirenberg inequalities, div-curl systems, symbols of differential operators}

\subjclass{58A10,  35R03, 26D15,  43A80,
46E35, 35F35}

\begin{abstract} The $L^1$-Sobolev inequality  states that for compactly supported functions $u$ on the Euclidean $n$-space, 
the $L^{n/(n-1)}$-norm of a compactly supported function
 is controlled by the $L^1$-norm of its gradient.
The generalization to differential forms (due to Lanzani \& Stein and Bourgain \& Brezis) is recent, and states that a the $L^{n/(n-1)}$-norm of a compactly supported differential
$h$-form is controlled by the $L^1$-norm of its exterior differential $du$ and its exterior codifferential $\delta u$ (in special cases the 
$L^1$-norm must be replaced by the $\mc H^1$-Hardy norm).  We shall extend this result to Heisenberg groups in the framework of an appropriate complex of differential forms.

\end{abstract}

\maketitle

%
%

%
%
\section{Introduction}\label{heisenberg}

The $L^1$-Sobolev inequality (also known as Gagliardo-Nirenberg inequality) states that for compactly supported functions $u$ on the Euclidean $n$-space, 
\begin{equation}\label{GNineq}
\|u\|_{L^{n/(n-1)}(\R^n)}\le c\|\nabla u\|_{L^1(\R^n)}.
\end{equation}
The generalization to differential forms is recent (due to  Bourgain \& Brezis and Lanzani \& Stein), and states that the $L^{n/(n-1)}$-norm of a compactly supported differential
$h$-form is controlled by the $L^1$-norm of its exterior differential $du$ and its exterior codifferential $\delta u$ (in special cases the 
$L^1$-norm must be replaced by the $\mc H^1$-Hardy norm).  We shall extend this result to Heisenberg groups in the framework of an appropriate complex of differential forms.

\subsection{The Euclidean theory}

In a series of papers (\cite{BB2003}, \cite{BB2004}, \cite{BB2007}), Bourgain and Brezis establish new estimates for the Laplacian, 
the div-curl system, and more general Hodge systems in $\rn n$ and
they show in particular that
if $\stackrel{\rightarrow} F$ is a compactly supported smooth vector field
  in $\R^n$, with $n\ge 3$, and if $\curl \stackrel{\rightarrow} F = \stackrel{\rightarrow} f$
and $\mathrm{div}\, \stackrel{\rightarrow} F = 0$, then
there exists a constant $C>0$ so that
\begin{equation}\label{BB04}
\|\stackrel{\rightarrow} F\|_{L^{n/(n-1)}(\R^n)}\le\|  \stackrel{\rightarrow}  f\|_{L^1(\R^n)}\,.
\end{equation}
This result does not follow straightforwardly from Calder\`on-Zygmund theory and Sobolev inequality.
Indeed, suppose for sake of simplicity $n=3$ and let
$\stackrel{\rightarrow} F$ be a compactly supported smooth vector field,  and consider the system
\begin{equation}\label{system intro}
\left\{\begin{aligned}
\curl \stackrel{\rightarrow} F= \stackrel{\rightarrow} f \\ \mathrm{div}\,\stackrel{\rightarrow} F = 0\,. &
\end{aligned}
\right.
\end{equation}
 It is well known that $\stackrel{\rightarrow}F =(-\Delta)^{-1}\curl \stackrel{\rightarrow} f$ is a solution of \eqref{system intro}. 
 Then, by Calder\'on-Zygmund theory we can say that
$$
\|\nabla \stackrel{\rightarrow} F\|_{L^{p}(\R^3)}\le C_p\|\stackrel{\rightarrow} f\|_{L^p(\R^3)}\,, \quad \mathrm{for}\quad {1<p<\infty}.
$$
Then, by  Sobolev inequality, if  $1<p<3$ we have:
$$
\|  \stackrel{\rightarrow} F\|_{L^{p*}(\R^3)}\le\| \stackrel{\rightarrow}f\|_{L^p(\R^3)}\,,
$$ where $\frac{1}{p*}=\frac{1}{p}-\frac{1}{3}$. When we turn to the case $p=1$ the first inequality fails. The second remains true. 
This is exactly the result proved  by Bourgain and Brezis.


In \cite{LS} Lanzani \& Stein proved that \eqref{GNineq}
is the first link of a chain of analogous inequalities for compactly supported
smooth differential $h$-forms in $\rn n$, $n\ge 3$,
\begin{eqnarray}
\label{0a} &\|u\|_{L^{n/(n-1)}(\rn n)}\le C\, \big( \|du \|_{L^1(\rn n)} + \|\delta u \|_{L^1(\rn n)}\big) & \mbox{if $h\neq 1, n-1$;}
\\
\label{0b} &
\|u\|_{L^{n/(n-1)}(\rn n)}\le C\, \big( \|du \|_{L^1(\rn n)} + \|\delta u \|_{\mc H^1(\rn n)}\big) &\mbox{if $h=1$;}
\\ \label{0c} &
\|u\|_{L^{n/(n-1)}(\rn n)}\le C\, \big( \|du \|_{\mc H^1(\rn n)} + \|\delta u \|_{L^1(\rn n)}\big) &\mbox{if $h=n-1$,}
\end{eqnarray}
where $d$ is the exterior differential, and $\delta$ (the exterior
codifferential) is its formal $L^2$-adjoint. Here $\mc H^1(\rn n)$ is the real Hardy space (see e.g. \cite{stein}). In other words, the main
result of \cite{LS} provides a priori estimates for a div-curl systems with data in $L^1(\rn n)$. 
We stress that inequalities \eqref{0b} and \eqref{0c}  fail if we replace the Hardy norm with
the $L^1$-norm. Indeed (for instance), the inequality 
\begin{equation}\label{0bbis} 
\|u\|_{L^{n/(n-1)}(\rn n)}\le C\, \big( \|du \|_{L^1(\rn n)} + \|\delta u \|_{ L^1(\rn n)}\big) 
\end{equation}
is false for $1$-forms.
The counterexample is given by E.M. Stein in \cite{stein_diff}, p. 191.
Indeed, take $f_k\in \mc D(\mathbb R^n)$  such that $\|f\|_{L^1(\he n)} = 1$ for all $k\in \N$
and such that $(f_k)_{k\in\N}$ tends to the Dirac $\delta$ in the sense of distribution. Set now $v_k:=
\Delta^{-1}f_k$. Then estimate \eqref{0bbis} would yield that $\{ |\nabla v_k|  \; ; \;k\in N\}$ is bounded in
$L^{n/(n-1)}(\mathbb R^n)$, and then, taking the limit as $k\to\infty$ that $|x|^{-n} \in L^1(\mathbb R^n)$.

\subsection{The Heisenberg setting}

Recently, in \cite{CvS2009}, Chanillo \& Van Schaftingen extended Bourgain-Brezis inequality to a class of vector fields
in Carnot groups. Some of the results of \cite{CvS2009} are presented in Theorems \ref{chanillo_van}
and \ref{chanillo_van5} below in the setting of Heisenberg groups. These are the main tool that allows us to give a Heisenberg version of Lanzani \& Stein's result. We describe now the operators that will enter our theorem.

We denote by  $\he n$  the $n$-dimensional Heisenberg
group. It is well known that the Lie algebra $\mathfrak h$ of the left-invariant vector fields
admits the stratification $\mathfrak h = \mathfrak h_1\oplus \mathfrak h_2$. We shall refer to the elements
of $\mathfrak h_1$ as to the {\sl horizontal derivatives} on $\he n$.

Heisenberg groups admit a one-parameter group of automorphisms called dilations. Whereas, in Euclidean space, all  exterior forms of degree $h$ have homogeneity $h$ under Euclidean dilations, on the contrary, because of the stratification of $\mathfrak h$,
$h$-forms on Heisenberg groups split into two weight spaces, with weights $h$ and $h+1$. This leads to a modification $(E_0^*,d_c)$ of the de Rham complex introduced by Rumin. Bundles of covectors are replaced by subbundles $E_0^h$ and the exterior differentials by differential operators $d_c$ on spaces $\Gamma(E_0^h)$ of smooth sections of these subbundles.

It turns out that this complex, which is both invariant under left-translations and dilations, is easier to work with that ordinary differential forms.

The core of Rumin's theory relies on the following result.

\begin{theorem} \label{dc in H} If $0\le h\le 2n+1$  there exists a linear map
$$
d_c: \Gamma(E_0^h) \to \Gamma(E_0^{h+1})
$$
such that
\begin{itemize}
\item[i)] $d_c^2=0$ (i.e. $E_0:=(E_0^*,d_c)$ is a complex);
\item[ii)] the complex $E_0$ is exact;
\item[iii)] $d_c: \Gamma(E_0^h)\to \Gamma(E_0^{h+1})$ is an homogeneous differential operator in the 
horizontal derivatives
of order 1 if $h\neq n$, whereas $d_c: \Gamma(E_0^n)\to \Gamma(E_0^{n+1})$ is an homogeneous differential operator in the 
horizontal derivatives
of order 2;
\item[iv)] if $0\le h\le n$, then $\ast E_0^h = E_0^{2n+1-h}$;
\item[v)] the operator $\delta_c := (-1)^{h(2n+1)}\ast d_c \ast $ is the formal $L^2$-adjoint of $d_c$.
\end{itemize}

\end{theorem}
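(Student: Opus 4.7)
The plan is to follow Rumin's original construction, realizing $(E_0^*,d_c)$ as a differential complex built from the Lefschetz decomposition of horizontal forms and quasi-isomorphic to the de Rham complex.

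First, let $\theta$ be the left-invariant contact $1$-form on $\he n$ annihilating $\mfrak h_1$, so that $d\theta$ restricts to a symplectic form on $\mfrak h_1$. Classical symplectic linear algebra on $\covh{*}$ gives the primitive decomposition: writing $L\alpha := d\theta\wedge \alpha$ for the Lefschetz operator, the hard Lefschetz property holds and we let $P^h \subset \covh{h}$ denote the primitive horizontal forms, namely $P^h = \ker(L^{n-h+1})$ for $h\le n$. Next, I would introduce on $\Omega^*(\he n) = \Gamma(\covH{*})$ the two $d$-stable graded subobjects
$$
\mathcal I^* = \theta \wedge \Omega^{*-1} + d\theta \wedge \Omega^{*-2},\qquad \mathcal J^* = \{\alpha : \theta\wedge\alpha = 0,\ d\theta \wedge \alpha = 0\},
$$
and define the subbundle $E_0^h \subset \covH{h}$ to have fiber $P^h$ for $h \le n$ and $\theta \wedge P^{2n+1-h}$ for $h \ge n+1$.

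The differential $d_c$ is then built by lifting $\alpha \in \Gamma(E_0^h)$ to $\tilde\alpha \in \Omega^h$, differentiating, and projecting back onto $\Gamma(E_0^{h+1})$ by inverting Lefschetz on the $\mathcal I$-part. For $h \ne n$ this projection requires only algebraic corrections, so $d_c$ is first-order in horizontal derivatives, yielding (iii) in the non-middle case. For $h = n$ the error term $d\tilde\alpha$ genuinely couples to $d\theta\wedge\covh{n-1}$, and one must solve algebraically an equation of the form $L\beta = (\text{horizontal component of }d\tilde\alpha)$ with $\beta \in \covh{n-1}$; this is possible because $L : \covh{n-1} \to \covh{n+1}$ is an isomorphism (hard Lefschetz in middle degree). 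The corrected lift $\tilde\alpha + \theta\wedge\beta$ then yields $d_c\alpha$ of second order in horizontal derivatives, as required by (iii) at $h=n$. Property (i) follows from $d^2 = 0$ and the intrinsic nature of the lift-and-project procedure; property (ii) follows since the resulting complex is quasi-isomorphic to $(\Omega^*, d)$, which is acyclic in positive degrees on the contractible manifold $\he n$.

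For (iv), endow $\mfrak h$ with the left-invariant metric making a chosen graded basis orthonormal and let $\ast$ be the induced Hodge star on $\covH{*}$. A direct calculation on primitive forms gives $\ast P^h = \theta \wedge P^h$ for $h \le n$, which immediately yields $\ast E_0^h = E_0^{2n+1-h}$. Property (v) then follows from (iv) by the usual integration-by-parts pairing $d_c$ with the Hodge duality. The main obstacle is the construction of $d_c$ at the middle degree $h=n$: making the corrected operator simultaneously well-defined, independent of the lift, intrinsically second-order horizontal, and squaring to zero requires careful weight bookkeeping under the anisotropic dilations together with the hard Lefschetz isomorphism in middle degree.
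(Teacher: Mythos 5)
Your overall route is the right one: the paper does not prove this theorem itself but quotes it from Rumin's work, and your lift-and-project construction through the differential ideal $\mathcal I^*=\theta\wedge\Omega^{*-1}+d\theta\wedge\Omega^{*-2}$ and its annihilator $\mathcal J^*$, with the middle-degree operator obtained by correcting the lift so that $L\beta$ cancels the horizontal part of $d\tilde\alpha$ via the hard Lefschetz isomorphism $L:\covh{n-1}\to\covh{n+1}$, is exactly Rumin's argument. Items (i)--(iii) are correctly accounted for, including the reason $d_c$ is second order precisely at $h=n$, and exactness via the quasi-isomorphism with the de Rham complex of the contractible manifold $\he n$ is the standard conclusion.

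There is, however, a genuine error in your identification of the bundles above the middle degree, and it breaks your proof of (iv). You set $E_0^h=\theta\wedge P^{2n+1-h}$ for $h\ge n+1$, but $\theta\wedge P^{2n+1-h}$ has degree $1+(2n+1-h)=2n+2-h$, not $h$; it sits inside $\covH{h}$ only when $h=n+1$. The correct description (the paper's Definition \ref{rumin in H}) is $E_0^h=\{\beta\wedge\theta:\ \beta\in\covh{h-1},\ L\beta=0\}$, and since $\ker L\cap\covh{h-1}=L^{h-n-1}P^{2n+1-h}$, your candidate space is off by the Lefschetz twist $L^{h-n-1}$ --- this is precisely the content of Proposition \ref{varia}, v). The same degree slip invalidates the ``direct calculation'' $\ast P^h=\theta\wedge P^h$: for $h<n$ the left-hand side has degree $2n+1-h$ while the right-hand side has degree $h+1$, so the identity cannot hold except at $h=n$. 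The correct Weil-type formula for the Hodge star of a primitive $h$-form shows that, up to constants and signs, $\ast$ maps $P^h$ onto $\theta\wedge L^{n-h}P^h=\theta\wedge\bigl(\ker L\cap\covh{2n-h}\bigr)$, and it is this statement, combined with Proposition \ref{varia}, v), that yields $\ast E_0^h=E_0^{2n+1-h}$. With (iv) thus repaired, your integration-by-parts argument for (v) does go through, though you should also verify that the pairing annihilates the ideal terms so that $d_c$ can be traded for $d$ under the integral sign, with the usual extra care at the middle degree where $d_c$ is second order. In short: right architecture, but the high-degree bundles and the duality computation require the Lefschetz power $L^{n-h}$ that your sketch omits.
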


\begin{definition} If $0\le h\le 2n+1$, $1\le p \le \infty$,
we denote by $L^p( \he n, E_0^h)$
the space of all sections of $E_0^h$ such that their
components with respect a given left-invariant basis belong to
$L^p(\he n)$, endowed with its natural norm. Clearly, this definition
is independent of  the choice of the basis itself.
If $h=0$, we write $L^p(\he n)$ for $L^p(\he n, E_0^0)$.

The notations $\mc D(\he n, E_0^h)$, $\mc S(\he n, E_0^h)$, $\mc E(\he n, E_0^h)$, and
$\mc H^1(\he{n}, E_0^h)$ have an analogous  meaning (here  $\mc H^1$ is the Hardy space in $\he n$ defined in 
\cite{folland_stein}, p.75).
\end{definition}

Now can state our main result that generalizes the results of \cite{BF7} to all Heisenberg groups.

\begin{theorem}\label{Hn} Denote by $(E_0^*,d_c)$ the Rumin's complex in $\he n$, $n>2$
(for the cases $n=1,2$ we refer to \cite{BF7}).
Then there exists $C>0$ such that for any $h$-form
$u\in \mc D(\he n, E_0^h)$, $0\le h\le 2n+1$, such that
$$
\left\{\begin{aligned}
 d_c u = f \\ \delta_c u = g&
\end{aligned}
\right.
$$
we have:
\begin{itemize}
\item[i)] if $h=0,2n+1$, then
 \begin{align*} &\|u\|_{L^{Q/(Q-1)}(\he{n})\hphantom{, E_0^{2n+1}}}\le   C\|f \|_{L^1(\he{n}, E_0^{1})} ;
\\
&\|u\|_{L^{Q/(Q-1)}(\he{n}, E_0^{2n+1})}\le    C\|g \|_{L^1(\he {n},E_0^{2n})} ;
\end{align*}

\item[ii)] if $h=1,2n$, then
 \begin{align*} 
 &\|u\|_{L^{Q/(Q-1)}(\he{n}, E_0^1)}\le  C\big( \|f \|_{L^1(\he{n},E_0^2)} + \|g \|_{\mc H^1(\he {n})}\big);
\\
&\|u\|_{L^{Q/(Q-1)}(\he{n}, E_0^{2n})}\le   C\big( \|f \|_{\mc H^1(\he{n}, E_0^{2n+1})} + \|g \|_{L^1(\he {n}, E_0^{2n-1})}\big) ;
\end{align*}

\item[iii)]  if  $1<h<2n$ and $h\neq n,n+1$, then
 \begin{align*} 
&\|u\|_{L^{Q/(Q-1)}(\he{n}, E_0^h)}\le   C\big( \|f \|_{L^1(\he{n}, E_0^{h+1})} + \|g \|_{L^1(\he{n}, E_0^{h-1})}\big) ;
\end{align*}

\item[iv)] if $h=n,n+1$, then
 \begin{align*} 
&\|u\|_{L^{Q/(Q-2)}(\he{n}, E_0^n)}\le   C\big( \|f \|_{L^1(\he{n}, E_0^{n+1})} + \|d_cg \|_{L^1(\he{n}, E_0^{n})}\big) ;
\\
&\|u\|_{L^{Q/(Q-2)}(\he{n}, E_0^{n+1})}\le   C\big(\|\delta_cf \|_{L^1(\he{n}, E_0^{n+1})} + \|g \|_{L^1(\he{n}, E_0^{n})}  \big) ;
\\
& \|u\|_{L^{Q/(Q-1)}(\he{n}, E_0^n)}\le C \|g \|_{L^1(\he{n}, E_0^{n-1})} \quad\mbox{if $f=0$;}
\\
& \|u\|_{L^{Q/(Q-1)}(\he{n}, E_0^{n+1})}\le C\|f \|_{L^1(\he{n}, E_0^{n+2})} \quad\mbox{if $g=0$.}
\end{align*}
\end{itemize}

\end{theorem}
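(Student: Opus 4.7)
My plan is to follow the Lanzani--Stein blueprint, transposed from the de~Rham complex to Rumin's intrinsic complex $(E_0^*, d_c)$. The central object is Rumin's Laplacian $\Delta_{c,h}$ acting on $E_0^h$, constructed so as to be left-invariant, self-adjoint, and homogeneous of a uniform order in the horizontal derivatives: order $2$ when $h \notin \{n,n+1\}$ and order $4$ when $h \in \{n,n+1\}$, the jump being forced by item (iii) of Theorem~\ref{dc in H}. By the exactness of $(E_0^*,d_c)$ and the hypoellipticity of $\Delta_{c,h}$, this operator admits a left-invariant fundamental solution given by convolution with a kernel $K_h$, homogeneous of degree $-Q+2$ or $-Q+4$ respectively in the Folland--Stein sense. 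Composing the identity $\Delta_{c,h}u = \delta_c d_c u + d_c \delta_c u$ (suitably modified in the critical degrees, where one of the two summands must be squared to match orders) with $K_h$ yields an integral representation of $u$ as a sum of convolutions whose data are $d_c$- or $\delta_c$-images of $L^1$ sections.

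\textbf{Generic and end cases.} In case (iii), where $h \notin \{0,1,n,n+1,2n,2n+1\}$, the representation reads schematically $u = K_h \ast \delta_c f + K_h \ast d_c g$, i.e. each summand is the convolution of a type-$1$ kernel with a horizontal ``curl'' or ``divergence'' of an $L^1$ datum. Young-type estimates alone only give the weak-type bound $L^1 \to L^{Q/(Q-1),\infty}$; however, Theorems~\ref{chanillo_van} and~\ref{chanillo_van5} apply precisely to kernels paired against data with this $d_c$/$\delta_c$ compatibility structure, and they upgrade the bound to strong type, giving (iii). Case (i) reduces either to the classical $L^1$-Sobolev inequality for scalar functions on $\he n$ (when $h=0$) or to its dual via the Hodge star of Theorem~\ref{dc in H}(iv) (when $h=2n+1$). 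Case (ii) is structurally identical to (iii), except that one of $d_c u$, $\delta_c u$ lands in a scalar space, in which regime the appropriate Chanillo--Van Schaftingen statement requires the Hardy norm on that side; the Stein counterexample recalled in the introduction shows this substitution is unavoidable.

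\textbf{Critical degrees and main obstacle.} The delicate case is (iv), because at $h = n,n+1$ the Laplacian $\Delta_{c,h}$ has order $4$ and $K_h$ is of type $4$. A careful bookkeeping using the order-$2$ exterior differential at the middle of the complex yields, on $E_0^n$, the identity $\Delta_c u = \delta_c f + d_c\delta_c(d_c g)$, whence
\[
u = K_h \ast \delta_c f \;+\; K_h \ast d_c\delta_c(d_c g).
\]
The operators $K_h \ast \delta_c$ and $K_h \ast d_c\delta_c$ are both of net order $-2$ (recall $\delta_c : E_0^{n+1} \to E_0^n$ is of order $2$), which accounts for the Sobolev exponent $Q/(Q-2)$, and Chanillo--Van Schaftingen is then applied to the $L^1$-data $f$ and $d_c g$. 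The sharper exponent $Q/(Q-1)$ when $f=0$ (respectively $g=0$) is obtained by invoking exactness of the complex to write $u = d_c \varphi$ with $\delta_c d_c \varphi = g$ (respectively $u = \delta_c \psi$), a genuinely second-order system whose resolution smooths by just one degree. The main technical obstacle is the precise order-matching in Rumin's modified Laplacian in the critical degrees, together with the verification that each convolution appearing in the representation has exactly the $d_c$/$\delta_c$ compatibility needed to trigger the Chanillo--Van Schaftingen estimate rather than merely a weak-type bound.
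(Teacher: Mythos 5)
Your overall architecture (Rumin Laplacian of order $2$ or $4$, homogeneous fundamental solution, Chanillo--Van Schaftingen to beat the failure of Calder\'on--Zygmund at $L^1$, net-order bookkeeping giving $Q/(Q-2)$ in the middle degrees) does match the paper's. But there is a genuine gap at the crux. Theorems \ref{chanillo_van} and \ref{chanillo_van5} do \emph{not} apply to ``kernels paired against data with $d_c/\delta_c$ compatibility structure'': they require a horizontal vector field $G=\sum_i G_iW_i$ with $\sum_i W_iG_i=0$ in $\mc D'(\he n)$ (resp.\ a $2$-tensor satisfying the second-order moment condition). The hypothesis you actually have, $d_cf=0$ for $f\in\Gamma(E_0^{h+1})$, is a different kind of condition: in coordinates it says $\sum_{k,i}F_{I,k,i}W_i f_k=0$ for each $I$, which makes the fields $G_I=\sum_i\bigl(\sum_k F_{I,k,i}f_k\bigr)W_i$ divergence-free, but one must still recover each component $f_\ell$ as a constant-coefficient combination of the $G_{I,i}$. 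That recovery is precisely Theorem \ref{pierre}, the paper's declared main new contribution, and it is nontrivial: it amounts to left-invertibility of the symbol $\sigma(d_c)$ (resp.\ its symmetric part $\Sigma(d_c)$ at degree $n$), proved via $Sp_{2n}(\R)$-equivariance of $d_c$ and irreducibility of the symplectic action on $E_0^h$ (Corollary \ref{irreducible}, Proposition \ref{invariance}). Your proposal assumes this bridge silently, so the claimed ``upgrade to strong type'' is unjustified as written. Two smaller inaccuracies: the paper works dually, estimating $\scal{u}{\phi}$ with $\mc K$ applied to the test form $\phi$ (the CvS theorems are bilinear, so even your direct representation $u=K_h\ast\delta_c f+\cdots$ must ultimately be run through this pairing); and the Hardy-norm terms in case (ii) are not produced by any CvS statement but by the mapping property of kernels of type $1$ from $\mc H^1$ to $L^{Q/(Q-1)}$ (Folland--Stein, Theorem 6.10).

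In case (iv) your identity $\Delta_{\he{},n}u=\delta_c f+d_c\delta_c(d_cg)$ and the net-order count are correct and match the paper (note this is exactly why the estimate carries $\|d_cg\|_{L^1}$ rather than $\|g\|_{L^1}$, a point your summary glosses over). However, your route to the sharper $Q/(Q-1)$ endpoint estimates via exactness --- writing $u=d_c\varphi$ and calling $\delta_cd_c\varphi=g$ a system that ``smooths by one degree'' --- would not work as stated: exactness provides a potential $\varphi$ with no support or size control, and no estimate is derived. The paper avoids potentials altogether: for $h=n+1$, $g=0$ it writes $\scal{u}{\phi}=\scal{f}{d_c\delta_cd_c\mc K\phi}$, applies Theorem \ref{pierre} plus Theorem \ref{chanillo_van} to the $d_c$-closed form $f$, and uses that $\nabla_{\he{}}d_c\delta_cd_c\mc K$ has a kernel of type $0$ to bound the pairing by $\|f\|_{L^1}\|\phi\|_{L^Q}$; the case $h=n$, $f=0$ then follows by Hodge duality. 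You would need to either adopt this duality argument or supply a genuinely new solvability-with-estimates statement for the potential, which your sketch does not contain.
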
 

The proof of Theorem \ref{Hn} follows  the lines of the proofs in \cite{BF7} of the corresponding results for
$\he 1$ and $\he 2$. The new crucial contribution of the present paper in contained in Theorem \ref{pierre} that,
roughly speaking, states that the components 
with respect to a given basis of closed forms in $E_0^h$
are linear combinations of the components of a horizontal vector field  with vanishing  ``generalized horizontal divergence''.
This is obtained by proving that the symbol of the intrinsic differential $d_c$ is left-invariant and invertible (see
Corollary \ref{irreducible} and Proposition \ref{invariance}).

In Section \ref{notations} we fix the notations we shall use throughout this paper. In Section \ref{heisenberg tensor}
we gather some more or less known results about tensor analysis in Heisenberg groups. Section \ref{af} recalls results borrowed to Chanillo and Van Schaftingen. Section \ref{main} contains Theorem \ref{pierre} together with several auxiliary results.  
The proof of Theorem \ref{Hn} is completed in Section \ref{analysis}.
Section \ref{final} contains a variant of Theorem \ref{Hn} where no differential operator occurs on the right hand side.

\section{Notations and definitions} \label{notations}

As above, we denote by $\he n$ the $n$-dimensional Heisenberg group
 identified  with $\rn {2n+1}$ through exponential
coordinates. A point $p\in \he n$ is denoted by
$p=(x,y,t)$, with both $x,y\in\rn{n}$
and $t\in\R$.
   If $p$ and
$p'\in \he n$,   the group operation is defined as
\begin{equation*}
p\cdot p'=(x+x', y+y', t+t' + \frac12 \sum_{j=1}^n(x_j y_{j}'- y_{j} x_{j}')).
\end{equation*}

In particular for any $p\in\he n$ there is a familiy of (left) translations $\tau_p:\he n\to\he n$ defined  
by 
$$
\tau_p q := p\cdot q, \quad q\in \he n.
$$

For a general review on Heisenberg groups and their properties, we
refer to \cite{stein}, \cite {GromovCC}, \cite{BLU}, and to \cite {VarSalCou}.
We limit ourselves to fix some notations, following \cite{FSSC_advances}.

    We denote by  $\mfrak h$
 the Lie algebra of the left
invariant vector fields of $\he n$. As customary, $\mathfrak h$ is identified
with the tangent space $T_e\he n$ at the origin. The standard basis of $\mfrak
h$ is given, for $i=1,\dots,n$,  by
\begin{equation*}
X_i := \partial_{x_i}-\frac12 y_i \partial_{t},\quad Y_i :=
\partial_{y_i}+\frac12 x_i \partial_{t},\quad T :=
\partial_{t}.
\end{equation*}
The only non-trivial commutation  relations are $
[X_{j},Y_{j}] = T $, for $j=1,\dots,n.$ 
The {\it horizontal subspace}  $\mfrak h_1$ is the subspace of
$\mfrak h$ spanned by $X_1,\dots,X_n$ and $Y_1,\dots,Y_n$.
Coherently, from now on, we refer to $X_1,\dots,X_n,Y_1,\dots,Y_n$
(identified with first order differential operators) as to
the {\it horizontal derivatives}. Denoting  by $\mfrak h_2$ the linear span of $T$, the $2$-step
stratification of $\mfrak h$ is expressed by
\begin{equation*}
\mfrak h=\mfrak h_1\oplus \mfrak h_2.
\end{equation*}

\bigskip

The stratification of the Lie algebra $\mfrak h$ induces a family of non-isotropic dilations
$\delta_\lambda$, $\lambda>0$ in $\he n$. The homogeneous dimension  of $\he n$
with respect to $\delta_\lambda$, $\lambda>0$ is
$$
Q= 2n+2.
$$

The vector space $ \mfrak h$  can be
endowed with an inner product, indicated by
$\scalp{\cdot}{\cdot}{} $,  making
    $X_1,\dots,X_n$,  $Y_1,\dots,Y_n$ and $ T$ orthonormal.
    
Throughout this paper,
to avoid cumbersome notations, we write also
\begin{equation}\label{campi W}
W_i:=X_i, \quad W_{i+n}:= Y_i, \quad W_{2n+1}:= T, \quad \text
{for }i =1, \cdots, n.
\end{equation}

The dual space of $\mfrak h$ is denoted by $\covH 1$.  The  basis of
$\covH 1$,  dual to  the basis $\{X_1,\dots , Y_n,T\}$,  is the family of
covectors $\{dx_1,\dots, dx_{n},dy_1,\dots, dy_n,\theta\}$ where 
$$ \theta
:= dt - \frac12 \sum_{j=1}^n (x_jdy_j-y_jdx_j)$$ is called the {\it contact
form} in $\he n$. 

We indicate as $\scalp{\cdot}{\cdot}{} $ also the
inner product in $\covH 1$  that makes $(dx_1,\dots, dy_{n},\theta  )$ 
an orthonormal basis. The same notation will be used to denote the
scalar product in $\vetH 1$  that makes $(X_1,\dots, X_{n}, T )$ 
an orthonormal basis. 

Coherently with the previous notation \eqref{campi W},
we set
\begin{equation*}
\omega_i:=dx_i, \quad \omega_{i+n}:= dy_i, \quad \omega_{2n+1}:= \theta, \quad \text
{for }i =1, \cdots, n.
\end{equation*}

We put
$       \vetH 0 := \covH 0 =\R $
and, for $1\leq k \leq 2n+1$,
\begin{equation*}
\begin{split}
 \vetH k& :=\mathrm {span}\{ W_{i_1}\wedge\dots \wedge W_{i_k}: 1\leq
i_1< \dots< i_k\leq 2n+1\}= :\mathrm {span}\,\Psi_k,   \\
         \covH k& :=\mathrm {span}\{ \omega_{i_1}\wedge\dots \wedge \omega_{i_k}:
1\leq i_1< \dots< i_k\leq 2n+1\}= :\mathrm {span}\,\Psi^k.
\end{split}
\end{equation*}

The inner product $\scalp{\cdot}{\cdot}{} $ extends canonically to
$\vetH k $ and to $\covH k$ making both bases $\Psi_k$ and $\Psi^k$
orthonormal. 

If $1\leq k\leq 2n+1$, we denote by $\ast$ the Hodge isomorphism
\begin{equation*}
\ast : 
 \covH k \longleftrightarrow \covH{2n+1-k}
\end{equation*}
associated with the scalar product $\scalp{\cdot}{\cdot}{} $ and the volume form
$$
dV:= \omega_1\wedge\cdots\wedge \omega_{2n}\wedge\theta.
$$

The same construction can be performed starting from the vector
subspace $\mfrak h_1\subset \mfrak h$,
obtaining the {\it horizontal $k$-vectors} and {\it horizontal
$k$-covectors}
\begin{equation*}
\begin{split}
         \veth k& :=\mathrm {span}\{ W_{i_1}\wedge\dots \wedge W_{i_k}:
1\leq
i_1< \dots< i_k\leq 2n\}   \\
         \covh k& :=\mathrm {span}\{ \omega_{i_1}\wedge\dots \wedge \omega_{i_k}:
1\leq i_1< \dots< i_k\leq 2n\}.
\end{split}
\end{equation*}

It is well known that the Lie algebra $\mathfrak h$ can be identified
with the tangent space at the origin $e=0$ of $\he n$, and hence
 the horizontal layer   $\mathfrak h_1$
can
be identified with a subspace  of $T\he{n}_e$
that we can still denote by $\veth 1$.

In addition, the  symplectic 2-form   
$$-d \theta =
\sum _{i=1}^n dx_i \wedge dy_{i}$$
induces on $\mathfrak h_1$  a symplectic structure.  We point out that this
symplectic structure is compatible with our fixed scalar product $\scal{\cdot}{\cdot}$
and with the canonical almost complex structure on $\mathfrak h_1\equiv \mathbb C^n$.

Horizontal $k$-vectors can be identified with skew-symmetric $k$-tensor in
$\otimes^k \mathfrak h_1$. 

To fix our notations, we remind the following definition.
\begin{definition}\label{pullback} If $V,W$ are finite dimensional linear vector spaces and
$S:V\to W$ is a linear isomorphism, we define a map
$$\otimes_{r} S :
\otimes^r V\to \otimes^r  W 
$$ 
as the linear map defined by
$$
(\otimes_{r} S) (v_1\otimes\cdots\otimes v_r)=
S(v_1)\otimes\cdots\otimes S(v_r),
$$
and a map
$$\otimes^{s} S :
\otimes^s W^*\to \otimes^s V^*$$
as the linear map defined by
$$
\Scal {(\otimes^s S)(\alpha)}{v_1\otimes\cdots\otimes v_s}=
\Scal{\alpha}{(\otimes_s S)(v_1\otimes\cdots\otimes v_s)}
$$
for any $\alpha\in \otimes^s W^*$
 and any  $s$-tensor 
$v_1\otimes\cdots\otimes v_s\in
\otimes^s V$.
Finally, we define
$$
(\otimes^s_rS ) : (\otimes^r V )\otimes (\otimes^s V^* )\to ( \otimes^r W) \otimes (\otimes^s W^*)
$$
as follows:
$$
(\otimes^s_r S )(v\otimes w) :=  (\otimes_r S)(v)\otimes (\otimes^s S^{-1} )(w).
$$
\end{definition}

Throughout this paper, we shall deal with $(r,s)$-tensors  in
$$
\big(\otimes^r\mathfrak h\big)  \otimes \big( \otimes^s\mathfrak h^*\big),
$$
with $r,s\in \mathbb Z$, $r,s\ge 0$, that, in turn define a left-invariant fiber bundle
over $\he n$, that we still denote by $\big(\otimes^r\mathfrak h\big)  \otimes \big( \otimes^s\mathfrak h^*\big)$
as follows: first we identify $\big(\otimes^r\mathfrak h\big)  \otimes \big( \otimes^s\mathfrak h^*\big)$
with a subspace of
$\big(\otimes^r T_e\he n \big)  \otimes \big( \otimes^s T_e^*\he n\big)$ that we denote by
$$\big(\otimes^r\mathfrak h\big)_e  \otimes \big( \otimes^s\mathfrak h^*\big)_e .$$
Then the fiber of $\big(\otimes^r\mathfrak h\big)_p  \otimes \big( \otimes^s\mathfrak h^*\big)_p$ of
$\big(\otimes^r\mathfrak h\big)  \otimes \big( \otimes^s\mathfrak h^*\big)$
over $p\in \he n$ is
$$
\big(\otimes^r\mathfrak h\big)_p  \otimes \big( \otimes^s\mathfrak h^*\big)_p:= 
(\otimes^h d\tau_{p}(e))\big(\otimes^r\mathfrak h\big)_e
 \otimes 
 (\otimes^h d\tau_{p^{-1}}(p))\big( \otimes^s\mathfrak h^*\big)_e.
$$

The elements of the space of smooth sections of this bundle, i.e.
$$
\Gamma\big(\he n, \big(\otimes^r\mathfrak h\big)  \otimes \big( \otimes^s\mathfrak h^*\big)\big),
$$
are called $(r,s)$-tensors fields.

A special instance will be the {\sl horizontal} tensors belonging to 
$$
\big(\otimes^r\mathfrak h_1\big)  \otimes \big( \otimes^s\mathfrak h_1^*\big).
$$
The horizontal $(r,0)$-tensors fields will be called also horizontal $r$-vector fields.
The skew-symmetric horizontal $(0,s)$-tensors fields  are identified with the
horizontal differential forms.

Moreover, to avoid cumbersome notations, from now on, when dealing with a vector bundle $\mc N$
over $\he n$, if there is no way to misunderstand
we shall write also
$$
\Gamma(\mc N)\quad\mbox{for} \quad \Gamma(\he n, \mc N).
$$

Finally, a subbundle $\mc N$ of $ \big(\otimes^r\mathfrak h\big)  \otimes \big( \otimes^s\mathfrak h^*\big)$ is
said {\it left invariant} if
\begin{equation}\label{linvariant}
\mc N_p = (\otimes_r^s d\tau_p(e) )\mc N_e.
\end{equation}

It is customary (see e.g. \cite{weil}, Chapter I) to denote by
$$
L: \covh h\to \covh {h+2}
$$
the Lefschetz operator defined by $L\alpha := d\theta\wedge\alpha$, and by $\Lambda$ its
dual operator with respect to $\scal{\cdot}{\cdot}$. If $2\le h\le 2n$, we denote by $P^h\subset \covh{h}$
the space of primitive $h$-covectors defined by 
\begin{equation}\label{al1}
P^1:=\covh 1 \quad\mbox{and}\quad P^h:= \ker\Lambda\cap \covh{h}, \quad 2\le h\le 2n.
\end{equation}

Following \cite{rumin_jdg}, \cite{rumin_gafa}, for $h=0,1,\dots,2n+1$ we define a linear subspace $E_0^h$,  of 
 $\covH{h}$   as follows:

\begin{definition} \label{rumin in H} We set
\begin{itemize}
\item if $1\le h\le n$ then $E_0^h= P^h $;
\item if $n< h\le 2n+1$ then $E_0^h = \{\alpha = \beta\wedge\theta, \; \beta\in \covh{h-1},
\; L\beta = 0\}.$
\end{itemize} 
\end{definition}

\begin{remark} Definition \ref{rumin in H} is not the original definition due to M. Rumin,
but is indeed a characterization of Rumin's classes that is proved in \cite{rumin_jdg}
(see also \cite{pansu_bourbaki}, \cite{BF7} and \cite{BBF}). 
\end{remark}

By \eqref{linvariant}, the spaces $E_0^*$  define a family of left-invariant
subbundles (still denoted by $E_0^h$,
$h=0,\dots,2n+1$). It turns out that we  can identify
$E_0^h$ and $(E_0^h)_e$.

\section{Basic facts on tensor analysis in Heisenberg groups}\label{heisenberg tensor}
The following decomposition theorem
holds.

\begin{proposition}\label{sym deco} The space of 2-contravariant horizontal tensors $\otimes^2\mathfrak h_1$
can be written as a direct (orthogonal) sum
$$
\otimes^2\mathfrak h_1 = \mathrm{Sym}\, (\otimes^2\mathfrak h_1)
\oplus
\veth{2}
$$
of the space $\mathrm{Sym}\, (\otimes^2\mathfrak h_1)$ of the symmetric 2-tensors
and of the space $\veth{2}$ of the skew-symmetric 2-tensors.

An orthonormal basis of $\mathrm{Sym}\, (\otimes^2\mathfrak h_1)$ with
respect to the canonical scalar product in $\otimes^2\mathfrak h_1 $ is
$$
\left\{ \dfrac12 (W_i\otimes W_j + W_j\otimes W_i)  \, ; \, i\le j \right\},
$$
whereas the canonical orthonormal basis $\Psi_2$ of $\veth{2}$ 
can be identified with
$$
\left\{ \dfrac12 (W_i\otimes W_j - W_j\otimes W_i)  \, ; \, i<j\right\}.
$$

\end{proposition}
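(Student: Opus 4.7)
The plan is to exploit the transposition involution $\sigma : \otimes^2\mathfrak h_1 \to \otimes^2\mathfrak h_1$ defined on simple tensors by $\sigma(v\otimes w) := w\otimes v$ and extended by linearity. Since $\sigma$ permutes the canonical orthonormal basis $\{W_i\otimes W_j : 1\le i,j\le 2n\}$ of $\otimes^2\mathfrak h_1$, it is a self-adjoint orthogonal involution with $\sigma^2 = I$. Its $(+1)$- and $(-1)$-eigenspaces are, by definition, $\mathrm{Sym}\,(\otimes^2\mathfrak h_1)$ and the subspace of antisymmetric $2$-tensors respectively, the latter being identified with $\veth{2}$ via the standard correspondence $W_i\wedge W_j \leftrightarrow W_i\otimes W_j - W_j\otimes W_i$ alluded to after Definition \ref{pullback}. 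Self-adjointness of $\sigma$ then forces these two eigenspaces to be mutually orthogonal, and every $T\in\otimes^2\mathfrak h_1$ splits canonically as
$$T = \tfrac12(T+\sigma T) + \tfrac12(T-\sigma T),$$
giving the claimed direct orthogonal sum decomposition.

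For the bases, I would verify the three usual properties in turn: membership in the appropriate eigenspace, correct cardinality (hence spanning), and pairwise orthogonality. The candidates $e^+_{ij} := \tfrac12(W_i\otimes W_j + W_j\otimes W_i)$ with $i\le j$ are manifestly $\sigma$-fixed, and their number $\binom{2n+1}{2}$ matches $\dim\mathrm{Sym}\,(\otimes^2\mathfrak h_1)$. For distinct index pairs $(i,j)\ne(k,\ell)$ with $i\le j$, $k\le\ell$, the supports of $e^+_{ij}$ and $e^+_{k\ell}$ inside the orthonormal basis $\{W_i\otimes W_j\}$ are disjoint, yielding orthogonality immediately. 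The analogous argument handles $e^-_{ij} := \tfrac12(W_i\otimes W_j - W_j\otimes W_i)$ for $i<j$, which under the identification coincides with the basis $\Psi_2$ of $\veth{2}$.

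The only mild subtlety will be to keep normalization conventions consistent: the $\tfrac12$ factors appearing in the statement must be reconciled with the convention under which the canonical scalar product on $\otimes^2\mathfrak h_1$ restricts to the already-orthonormal basis $\Psi_2$ of $\veth{2}$ as declared in Section \ref{notations}. This is a bookkeeping matter rather than a conceptual obstacle, and I do not anticipate any genuine difficulty in the proof.
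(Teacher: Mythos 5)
Your argument is correct in substance, and there is nothing in the paper to compare it against: Proposition \ref{sym deco} is stated as a known linear-algebra fact with no proof given, so your transposition-involution argument (with $\sigma$ orthogonal because it permutes the orthonormal basis $\{W_i\otimes W_j\}$, hence self-adjoint, hence with orthogonal $\pm1$-eigenspaces split by the projections $\tfrac12(I\pm\sigma)$) is exactly the standard route one would supply, and your dimension count $\binom{2n+1}{2}$ and the disjoint-support orthogonality argument are both right.

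One point deserves more than the ``bookkeeping'' status you give it in your last paragraph, because the reconciliation you anticipate does not actually exist. Under the canonical product inner product on $\otimes^2\mathfrak h_1$ (the one making $\{W_i\otimes W_j\}$ orthonormal), the off-diagonal elements $\tfrac12(W_i\otimes W_j\pm W_j\otimes W_i)$, $i<j$, have norm $\tfrac{1}{\sqrt2}$, while the diagonal elements $e^+_{ii}=W_i\otimes W_i$ have norm $1$; since the listed family mixes both, no global rescaling of the inner product can make all of its members simultaneously unit vectors, so the proposition's ``orthonormal'' cannot be delivered verbatim by any choice of convention. The honest conclusion of your proof is an \emph{orthogonal} basis, which becomes orthonormal only after replacing $\tfrac12$ by $\tfrac{1}{\sqrt2}$ for the off-diagonal symmetric and skew-symmetric elements (and similarly adjusting the identification of $\Psi_2$ inside $\otimes^2\mathfrak h_1$). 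This imprecision sits in the paper's statement rather than in your argument, and it is harmless for the way the proposition is used later --- in Definition \ref{symbol}, \eqref{simmetrico} and Theorem \ref{pierre} all such constants are absorbed into the coefficients $\tilde F_{I,k,i,j}$ and $b^J_{i,j,I}$ --- but a complete write-up should state the norms explicitly rather than promise a reconciliation that cannot be carried out.
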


\begin{definition}[See \cite{abraham_marsden}, Definition 1.7.16]
If $\phi:\he n \to \he n$ is a diffeomorphism, then the push-forward
of a tensor field $t:=v\otimes \alpha$, with $v\in\otimes^r\mathfrak h$ and
$\alpha\in\otimes^s\mathfrak h^*$, then its push-forward $\phi_*t$
at a point $p\in\he n$ is defined as
$$
\phi_*t(p):= \big(\otimes_r ^s d\phi(\tilde p)\big)((v
\otimes
\alpha)(\tilde p)),
$$
where $\tilde p:=\phi^{-1}(p)$. Moreover, the pull-back $\phi^*t$ of $t$
is defined by
$$
\phi^*t = (\phi^{-1})_*.
$$

\end{definition}

A tensor field 
$$
v\otimes \alpha\in \Gamma\big(\he n, \big(\otimes^r\mathfrak h\big)  \otimes \big( \otimes^s\mathfrak h^*\big)\big)
$$
is said {\sl left invariant} if
$$
\left(\tau_{q}\right)_* v\otimes \alpha = v\otimes \alpha\quad \mbox{for all $q\in\he n$.}
$$

\begin{lemma}\label{left invar} Let $v\otimes \xi\in \big(\otimes^r\mathfrak h\big)_e  \otimes \big( \otimes^s\mathfrak h^*\big)_e $ be given.
If $p\in\he n$, we set 
$$\mc T_{p}(v\otimes \xi):= (\tau_{p})_*({v\otimes \xi})\in \big(\otimes^r\mathfrak h\big)_p  \otimes \big( \otimes^s\mathfrak h^*\big)_p.
$$
 Then
 the map $p\to \mc T_{p}(v\otimes \xi)$ is 
left-invariant.

Thus,  if $V\otimes W$ is a linear subspace of $ \big(\otimes^r\mathfrak h\big)_e  \otimes \big( \otimes^s\mathfrak h^*\big)_e $, then $\{\mc T_{p}(v\otimes \xi),\; v\otimes \xi
\in V\otimes W, \; p\in \he n\}$ defines a left-invariant subbundle of 
$ \big(\otimes^r\mathfrak h\big)  \otimes \big( \otimes^s\mathfrak h^*\big) $. In addition,
if $\{v_i\otimes\xi_j, \;i=1,\dots N, j=1,\dots M\}$ is a basis of $V\otimes W$, then
$\{\mc T_{p}(v\otimes \xi), \;i=1,\dots N, j=1,\dots M\}$ is a left-invariant basis of the
fiber over $p\in \he n$.
\end{lemma}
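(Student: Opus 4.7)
The plan is to verify the lemma by unwinding the definitions of pushforward and of the fiber structure given in the excerpt, reducing everything to the chain rule together with the group law $\tau_q\circ\tau_{q^{-1}\cdot p}=\tau_p$.

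First I would establish the left-invariance of $p\mapsto \mathcal{T}_p(v\otimes\xi)$. By the definition of pushforward given just above, evaluation of $(\tau_q)_*\mathcal{T}_\bullet(v\otimes\xi)$ at a point $p\in\he n$ gives
$$
\big(\otimes_r^s d\tau_q(\tilde p)\big)\,\mathcal{T}_{\tilde p}(v\otimes\xi),\qquad \tilde p:=q^{-1}\cdot p.
$$
Unwinding $\mathcal{T}_{\tilde p}(v\otimes\xi)=(\otimes_r^s d\tau_{\tilde p}(e))(v\otimes\xi)$ and invoking the functoriality $\otimes_r^s(A\circ B)=(\otimes_r^s A)\circ(\otimes_r^s B)$ (which is immediate from Definition \ref{pullback}, once one observes that the covariant slot involves $S^{-1}$ and that inversion reverses composition), this rewrites as
$$
\big(\otimes_r^s(d\tau_q(\tilde p)\circ d\tau_{\tilde p}(e))\big)(v\otimes\xi).
$$
The inner composition equals $d\tau_p(e)$ by the chain rule applied to the identity $\tau_q\circ\tau_{\tilde p}=\tau_{q\cdot\tilde p}=\tau_p$, which is just associativity of the Heisenberg group law. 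Thus the expression equals $\mathcal{T}_p(v\otimes\xi)$, proving left-invariance.

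Given this, the remaining assertions are essentially formal. Since $v\otimes\xi\mapsto \mathcal{T}_p(v\otimes\xi)$ is linear, the image of $V\otimes W$ is a linear subspace of the fiber over $p$; the family of these subspaces is stable under $(\tau_q)_*$ by left-invariance applied vectorwise, so it defines a left-invariant subbundle in the sense of \eqref{linvariant}. For the basis statement, $d\tau_p(e):T_e\he n\to T_p\he n$ is a linear isomorphism (with inverse $d\tau_{p^{-1}}(p)$), hence so is its tensor-algebra extension $\otimes_r^s d\tau_p(e)$; therefore it carries any basis of $V\otimes W$ to a basis of the fiber over $p$.

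I do not foresee any serious obstacle: the content is essentially that pushforward by a group translation is compatible with composition of translations, a general fact for any Lie group acting on itself by left translation. The only care required is to keep track of the base points of the various differentials and to verify that the mixed covariant/contravariant object $\otimes_r^s$ is genuinely functorial under composition, which is the sole non-trivial bookkeeping step.
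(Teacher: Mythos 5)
Your argument is correct. The paper states Lemma \ref{left invar} without proof, treating it as a routine consequence of the definitions, and your verification supplies exactly the bookkeeping the authors leave implicit: the functoriality $\otimes_r^s(A\circ B)=(\otimes_r^s A)\circ(\otimes_r^s B)$ (where, as you note, the two reversals --- the $S^{-1}$ in the covariant slot of Definition \ref{pullback} and the contravariance of $\otimes^s$ --- cancel), combined with the chain rule applied to $\tau_q\circ\tau_{q^{-1}\cdot p}=\tau_p$, and your use of $(d\tau_p(e))^{-1}=d\tau_{p^{-1}}(p)$ for the basis claim is consistent with the paper's definition of the fiber $\big(\otimes^r\mathfrak h\big)_p\otimes\big(\otimes^s\mathfrak h^*\big)_p$.
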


We remind the following well-known identity (see e.g. \cite{AMR}, Proposition 6.16):
\begin{remark}\label{ovvia} If $W\in \mathfrak h_1$ is identified with a first order differential operator and
$\phi: \he n\to \he n$ is a diffeomorphism, then
$$
W(u\circ \phi)(x) = \left [(\phi_*W) u\right ] (\phi (x)).
$$
Moreover, if $W,Z\in \mathfrak h_1$, then
$$
\phi_*(W\otimes Z) = \phi_* W\otimes \phi_* Z.
$$
\end{remark}

Set $N_h\! := \dim E_0^h$. Given a family of left-invariant bases $\{\xi_k^h,\; k=1,\dots N_h\}$ of $E_0^h$, $1\le h\le n$ as
in Lemma \ref{left invar}, the differential $d_c$ can be written ``in coordinates'' as follows.

\begin{proposition} If $\; 0\le h\le 2n$ and
$$\alpha=\sum_k \alpha_k\, \xi_k^h\in \Gamma(E_0^h),$$
then
$$d_c\alpha = \sum_{I,k}  P_{I,k}\alpha_k\, \xi_I^{h+1},$$
where 
\begin{enumerate}
\item[i)]  if $ h\neq n$, then the $P_{I,k}$'s are linear homogeneous polynomials in $W_1,\dots, W_{2n}\in \mathfrak h_1$ (that are identified 
with homogeneous with first
order left invariant horizontal differential operators), i.e.
$$
P_{I,k}=\sum_i F_{I,k,i} W_i,
$$
where the $F_{I,k,i} $'s are real constants;
\item[ii)]  if $\; h=n$, then then the $P_{I,k}$'s are linear homogeneous polynomials in 
$W_i\otimes W_j \in \otimes^2\mathfrak h_1$, $i,j=1,\dots,2n$
(that are identified 
with  homogeneous second order
 left invariant differential horizontal operators), i.e.
$$
P_{I,k}=\sum_{i,j}F_{I,k,i,j} W_i\otimes W_j,
$$
where the $F_{I,k,i,j} $'s are real constants.
\end{enumerate}
\end{proposition}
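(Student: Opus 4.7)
The plan is to combine two structural properties of $d_c$: its commutation with pullback by left-translations (a consequence of the intrinsic nature of Rumin's construction) and its being a homogeneous horizontal differential operator of fixed order (Theorem~\ref{dc in H}(iii)). Since the basis $\{\xi_k^h\}$ is left-invariant by Lemma~\ref{left invar}, these two properties translate directly into the claim that the matrix entries $P_{I,k}$ of $d_c$ in this basis are constant-coefficient polynomials in the left-invariant horizontal vector fields.

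I would first verify the left-invariance of $d_c$, i.e.\ $\tau_p^* d_c = d_c \tau_p^*$ for every $p\in\he n$. This rests on the intrinsic character of Rumin's construction: the subbundles $E_0^h$ are built (Definition~\ref{rumin in H}) out of the Lefschetz operator $L=d\theta\wedge\cdot$ and the contact form $\theta$, which are left-invariant on $\he n$, while $d_c$ is characterized algebraically in terms of the de Rham differential $d$ (natural under arbitrary diffeomorphisms) composed with projections onto $E_0^*$. Hence every ingredient commutes with $\tau_p^*$, and so does $d_c$.

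Now write $\alpha=\sum_k \alpha_k\,\xi_k^h$ and $d_c\alpha=\sum_I \beta_I\,\xi_I^{h+1}$. Left-invariance of the bases gives $\tau_p^*(\alpha_k\,\xi_k^h)=(\alpha_k\circ\tau_p)\,\xi_k^h$, and likewise for $\xi_I^{h+1}$; combined with the commutation relation above, this forces the scalar operators $P_{I,k}:\alpha_k\mapsto\beta_I$ to commute with precomposition by any left-translation. Consequently each $P_{I,k}$ is a left-invariant scalar differential operator on $\he n$, and thus a polynomial in $W_1,\ldots,W_{2n},T$ with real constant coefficients.

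It remains to apply Theorem~\ref{dc in H}(iii): the $P_{I,k}$ are homogeneous in the horizontal derivatives, of order $1$ if $h\neq n$ and of order $2$ if $h=n$. In the first case, the only such polynomials are the $\R$-linear combinations $\sum_i F_{I,k,i}W_i$ of the horizontal derivatives themselves; in the second, they are linear combinations $\sum_{i,j} F_{I,k,i,j}W_iW_j$ of ordered products, which are naturally identified, as in the statement, with elements of $\otimes^2\mathfrak{h}_1$ acting as second-order differential operators. The only non-routine step is the verification that $d_c$ commutes with left-translations, since this is precisely what turns an a priori position-dependent expression into one with constant coefficients; everything else is a direct rephrasing of Theorem~\ref{dc in H}(iii) in the chosen basis.
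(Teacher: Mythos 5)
Your argument is correct and is essentially the paper's own route: the paper states this proposition without an explicit proof, treating it as an immediate consequence of Theorem \ref{dc in H}(iii) combined with the left-invariance of $d_c$, of the bundles $E_0^*$, and of the bases from Lemma \ref{left invar}, plus the Poincar\'e--Birkhoff--Witt description of left-invariant operators (invoked later in Section \ref{analysis}) --- exactly the chain you spell out. Your identification of the commutation $\tau_p^* d_c = d_c\tau_p^*$ as the one genuinely non-routine ingredient matches the paper, which records this invariance just before Proposition \ref{invariance}.
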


\begin{definition}\label{symbol} If $0\le h <n$ we denote $\sigma(d_c)$ the symbol of the intrinsic differential $d_c$
that is a smooth field of homomorphisms
$$
\sigma(d_c)\in \Gamma( \Hom (E_0^h, \mathfrak h_1\otimes E_0^{h+1}))
$$
defined as follows:  if $p\in\he{n}$, $\bar {\alpha} = \sum_k\bar{\alpha}_k \xi_k^h(p)\in (E_0^h)_p$, then 
we can assume there exists a smooth  differential 
form $\alpha = \sum_k\alpha_k  \xi_k^h \in \Gamma( E_0^h)$ such that $\bar{\alpha}= \alpha (p)$. Thus, if 
$u\in \mc E(\he{n})$ satisfies $u(p)=0$, then we have
$$
d_c(u\alpha)(p)= \sum_{I,k}  (P_{I,k}u)(p)\alpha_k(p)\xi_I^{h+1}(p).
$$
Hence we set
$$
\sigma(d_c)(p)\bar{\alpha}:= \sum_{I,k}  \bar{\alpha}_jP_{I,k}(p) \otimes \xi_I^{h+1}(p).
$$
If $h=n$,  $d_c$  is now a second order differential operator in the horizontal vector fields
and then its symbol $\sigma(d_c)$ can be identified with a section
$$
\sigma(d_c) \in \Gamma(\Hom( E_0^n, \otimes^2 \mathfrak h_1\otimes E_0^{n+1}))
$$
as follows:  if $p\in\he{n}$, $\bar {\alpha} = \sum_k\bar{\alpha}_k\, \xi_k^n(p)\in (E_0^n)_p$, then we 
we can assume there exists a smooth  differential 
form $\alpha = \sum_k\alpha_k  \, \xi_k^n \in E_0^n$ such that $\bar{\alpha}= \alpha (p)$. Thus, if 
$u\in \mc E(\he{n})$ satisfies $u(p)=0$ and $W_i u(p)=0$, $i=1,\dots,2n$, then we have
$$
d_c(u\alpha)(p)= \sum_{I,k}  (P_{I,k}u)(p)\alpha_k (p)\, \xi_I^{n+1}(p).
$$
Hence we set
$$
\sigma(d_c)(p)\bar{\alpha}:= \sum_{I,k } \bar{\alpha}_k P_{I,k}(p) \otimes \xi_I^{n+1}(p).
$$
On the other hand, the canonical projection
$$  p: \otimes^2h_1 \otimes E_0^{n+1} \to \dfrac{\otimes^2h_1 }{\covh 2} \otimes E_0^{n+1}
$$
given by 
$$
p(W_i\otimes W_j\otimes \xi):= [W_i\otimes W_j] \otimes \xi
$$
defines a new symbol (the symmetric part of the symbol)
$$
\Sigma(d_c):= p\circ \sigma(d_c) \in \Hom( E_0^n,  \dfrac{\otimes^2h_1 }{\covh 2} \otimes E_0^{n+1}).
$$
Clearly, since we are dealing with 2-tensors,  $[W_i\otimes W_j]$  can be represented by
$$
\frac12 \left( W_i\otimes W_j + W_j\otimes W_i\right).
$$
Thus
\begin{equation}\label{simmetrico}\begin{split}
\Sigma & (d_c) \bar{\alpha} 
\\&
= \dfrac12 \sum_I \sum_{i,j,k}\bar{\alpha}_k 
(F_{I,k,i,j} + F_{I,k,j,i}) \left(W_i\otimes W_j
 + W_j\otimes W_i \right) \otimes \xi_I^{n+1}
 \\&
=: \sum_I \sum_{i,j,k}\bar{\alpha}_k 
\tilde{F}_{I,k,i,j} \left(W_i\otimes W_j
 + W_j\otimes W_i \right) \otimes \xi_I^{n+1} .
\end{split}\end{equation}

\end{definition} 

\begin{remark} 
If $0\le h <n$, mimicking the usual definition of the principal symbol $\sigma(P)$ of a differential operator $P$ (see e.g. \cite{narashiman}, Definition 3.3.13 or \cite{palais}, IV.3), one can cook up a notion of horizontal principal symbol that takes into account Heisenberg homogeneity. For $d_c$, this map would belong to
$$
\Gamma(\Hom (E_0^h\otimes \mathfrak h_1^*, E_0^{h+1})).
$$
However, our notation is not misleading, since, by \cite{dieudonne} (16.8.2.3) and (16.18.3.4),
\begin{equation*}\begin{split}
 \Hom (E_0^h& \otimes \mathfrak h_1^*, E_0^{h+1}) \cong
\Hom( E_0^n, \Hom (\mathfrak h_1^*, E_0^{h+1})\;) 
\\&
\cong
\Hom( E_0^h, \mathfrak h_1\otimes E_0^{h+1}).
\end{split}\end{equation*}
An analogous comment applies when $h=n$. In this case, only the projection $\Sigma(d_c)$ of the symbol onto symmetric tensors will be used.

\end{remark}

 Since $ \mathfrak h_1$, $d_c$  and $E_0^*$ are  invariant under left translations, then the symbol $\sigma(d_c)$
is uniquely determined by its value at the point $p=e$. More precisely, we have:

\begin{proposition} If $1\le h < n$ and $p\in\he n$, 
then
the following diagram is commutative:
\begin{equation}\label{diagram 1}
\begin{CD}
(E^h_0)_p @>{\sigma(d_c)(p)}>>  (\mathfrak h)_p\otimes (E_0^{h+1})_p\\
@V{\otimes^h\tau_{p}(e)}VV   @A{\mc T_p}AA @.\\
 (E^h_0)_e@>{\sigma(d_c)(e)}>> (\mathfrak h)_e\otimes (E_0^{h+1})_e
\end{CD}
\end{equation}

An analogous comment applies when $h=n$.
\end{proposition}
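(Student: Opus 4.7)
The plan is to reduce the commutativity of the diagram to the left-invariance of the four ingredients of $\sigma(d_c)$: the subbundle $E_0^{h+1}$, the left-invariant basis $\{\xi_k^{h+1}\}$ from Lemma \ref{left invar}, the horizontal fields $W_i\in\mathfrak h_1$, and the operator $d_c$ itself. The essential point, already recorded in the Proposition preceding Definition \ref{symbol}, is that in the expansion
$$
d_c\xi_k^h = \sum_I P_{I,k}\,\xi_I^{h+1}, \qquad P_{I,k}=\sum_i F_{I,k,i}\,W_i,
$$
the coefficients $F_{I,k,i}$ are real constants, independent of the base point. Indeed, any dependence on $p$ would contradict the fact that $d_c$ commutes with $(\tau_q)^*$ for every $q\in\he n$ while both the $W_i$ and the $\xi_I^{h+1}$ are themselves left-invariant.

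Given $\bar\alpha=\sum_k\bar\alpha_k\xi_k^h(p)\in(E_0^h)_p$, I would extend it to a section $\alpha=\sum_k\alpha_k\xi_k^h\in\Gamma(E_0^h)$ with $\alpha_k(p)=\bar\alpha_k$ and pick $u\in\mc E(\he n)$ with $u(p)=0$. Since each $W_i$ is a first-order derivation and $u(p)=0$, the Leibniz rule yields $W_i(u\alpha_k)(p)=(W_iu)(p)\,\alpha_k(p)$, whence
$$
\sigma(d_c)(p)\bar\alpha = \sum_{I,k,i}\bar\alpha_k\,F_{I,k,i}\,W_i(p)\otimes\xi_I^{h+1}(p).
$$
In this expression the numerical data $\bar\alpha_k$ and $F_{I,k,i}$ are point-independent, and all $p$-dependence is carried by the left-invariant basis vectors $W_i(p)$ and $\xi_I^{h+1}(p)$.

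To verify the commutativity, I would chase $\bar\alpha$ around the diagram the other way. Because $\{\xi_k^h\}$ is left-invariant in the sense of Lemma \ref{left invar}, the map $\otimes^h\tau_p(e)$ sends $\xi_k^h(p)$ to $\xi_k^h(e)$, and hence sends $\bar\alpha$ to $\sum_k\bar\alpha_k\xi_k^h(e)\in(E_0^h)_e$ with the same coordinates $\bar\alpha_k$. Applying $\sigma(d_c)(e)$ produces $\sum_{I,k,i}\bar\alpha_k F_{I,k,i}\,W_i(e)\otimes\xi_I^{h+1}(e)$, and $\mc T_p=(\tau_p)_*$ sends each $W_i(e)\otimes\xi_I^{h+1}(e)$ to $W_i(p)\otimes\xi_I^{h+1}(p)$ by left-invariance of the $W_i$ and the $\xi_I^{h+1}$. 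The two expressions for $\sigma(d_c)(p)\bar\alpha$ match. The case $h=n$ is entirely parallel: the second-order coefficients $F_{I,k,i,j}$ are again constants by the same argument, and the extra condition $W_iu(p)=0$ in Definition \ref{symbol} kills the cross terms from the Leibniz rule, leaving the analogous identity in $\otimes^2\mathfrak h_1\otimes (E_0^{n+1})_p$. I do not expect a real obstacle; the only bookkeeping one must attend to is the interplay between the various pullback and push-forward conventions of Definition \ref{pullback} and the definition of $\mc T_p$, which is precisely what Lemma \ref{left invar} is designed to handle.
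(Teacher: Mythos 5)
Your proof is correct and takes essentially the same route as the paper: the paper states the proposition as an immediate consequence of the left-invariance of $\mathfrak h_1$, $d_c$ and $E_0^*$, with the constancy of the coefficients $F_{I,k,i}$ already recorded in the coordinate expression of $d_c$, and your diagram chase (pullback along $\otimes^h d\tau_p(e)$ sending $\xi_k^h(p)$ to $\xi_k^h(e)$, then $\mc T_p=(\tau_p)_*$ restoring the left-invariant frame at $p$) is precisely the verification the paper leaves implicit. Your handling of $h=n$, using $u(p)=0$ and $W_iu(p)=0$ to kill the Leibniz cross terms, likewise matches the paper's ``analogous'' remark.
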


\section{Analytic facts}
\label{af}

The proof of Theorem \ref{Hn} consists in applying
the following two results due to Chanillo \& Van Schaftingen, after an algebraic reduction that will be performed in the next section.

\begin{definition} \label{gradient} If $f:\he n\to \R$, we denote by
$\nabla_{\he{}}f$ the horizontal vector field
\begin{equation*}
\nabla_{\he{}}f:=\sum_{i=1}^{2n}(W_if)W_i,
\end{equation*}
whose coordinates are $(W_1f,...,W_{2n}f)$. If $\Phi$ is a horizontal vector field,
then $\nabla_{\he{}}\Phi$ is defined componentwise.
\end{definition}

\begin{theorem}[\cite{CvS2009}, Theorem 1]\label{chanillo_van} Let $\Phi\in \mc D(\he n,\mathfrak h_1)$ be a smooth
compactly supported horizontal vector field. Suppose  $G\in L^1_{\mathrm{loc}}(\he n,\mathfrak h_1)$
is $\he{}$-di\-ver\-gen\-ce free, i.e. if 
$$
G= \sum_i G_iW_i, \quad\mbox{then} \quad \sum_iW_iG_i=0 \quad\mbox{in $\mc D'(\he n)$}.
$$
Then
$$
\big|\scal{G}{\Phi}_{L^2(\he{n},\mathfrak h_1)} \big| \le C\|G\|_{L^1(\he{n},\mathfrak h_1)} \|\nabla_{\he{}} \Phi\|_{L^Q(\he{n},\mathfrak h_1)}.
$$

\end{theorem}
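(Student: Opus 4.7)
The plan is to reduce the inequality by duality to a Bourgain--Brezis type decomposition of $\Phi$. Since the left-invariant horizontal fields $W_i$ are formally skew-adjoint with respect to Haar measure on $\he n$, the hypothesis $\sum_iW_iG_i=0$ in $\mc D'(\he n)$ is equivalent to
\begin{equation*}
\int_{\he n}\scal{G}{\nabla_{\he{}}h}\,dp = 0 \qquad\text{for every } h\in\mc D(\he n),
\end{equation*}
and a standard truncation/mollification argument, using $G\in L^1$, extends this orthogonality to all $h\in L^1_{\mathrm{loc}}$ with $\nabla_{\he{}}h\in L^Q$. Consequently, once one can split $\Phi$ into a bounded field plus a horizontal gradient, only the bounded part pairs nontrivially with $G$.

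The central step is therefore the following Bourgain--Brezis-type decomposition: for every $\Phi\in\mc D(\he n,\mfrak h_1)$ there exist $\Phi_\infty\in L^\infty$ and $h$ with $\nabla_{\he{}}h\in L^Q$ such that
\begin{equation*}
\Phi=\Phi_\infty+\nabla_{\he{}}h,\qquad \|\Phi_\infty\|_\infty+\|\nabla_{\he{}}h\|_Q\le C\|\nabla_{\he{}}\Phi\|_Q.
\end{equation*}
Granted this, the theorem is immediate:
\begin{equation*}
|\scal{G}{\Phi}_{L^2}|=|\scal{G}{\Phi_\infty}_{L^2}+\scal{G}{\nabla_{\he{}}h}_{L^2}|=|\scal{G}{\Phi_\infty}_{L^2}|\le\|G\|_1\|\Phi_\infty\|_\infty\le C\|G\|_1\|\nabla_{\he{}}\Phi\|_Q.
\end{equation*}

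To produce the decomposition I would adapt the original Bourgain--Brezis scheme to the sub-Riemannian setting. Fix a Littlewood--Paley decomposition $\Phi=\sum_{j\in\Z}\Phi_j$ based on the heat semigroup $e^{-t\Delta_{\he{}}}$ of the sub-Laplacian, with each $\Phi_j$ frequency-localized at horizontal scale $2^{-j}$. Bernstein-type inequalities on $\he n$ then give $\|\Phi_j\|_\infty\lesssim\|\nabla_{\he{}}\Phi_j\|_Q$, but the naive sum $\sum_j\|\Phi_j\|_\infty$ need not converge. Each $\Phi_j$ is therefore split $\Phi_j=A_j+\nabla_{\he{}}H_j$ by a thresholding procedure: inverting the sub-Laplacian on frequency-localized horizontal fields via spectral calculus and Calder\'on--Zygmund estimates in the style of Folland--Stein yields a primitive $H_j$ with $\|\nabla_{\he{}}H_j\|_Q\lesssim\|\Phi_j\|_Q$, while the bounded parts $A_j$ are chosen by a stopping-time construction that controls $\|\sum_jA_j\|_\infty$. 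Setting $\Phi_\infty:=\sum_jA_j$ and $h:=\sum_jH_j$ produces the required decomposition.

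The principal obstacle is precisely this stopping-time step: the delicate cancellations that make it work in the Euclidean case rest on Fourier multipliers and square-function estimates that do not transfer verbatim to $\he n$, and must be replaced by Hulanicki-type spectral multiplier theorems, Christ--Geller singular integrals, and atomic decompositions for the Heisenberg Hardy space. A secondary technical point is justifying the extension of the orthogonality against $G$ to non-compactly supported gradient primitives, which requires approximating $h$ in the homogeneous Sobolev seminorm while keeping the $L^1$ pairing under control.
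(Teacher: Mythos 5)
First, a point of comparison: the paper does not prove this statement at all --- it is imported as a black box from Chanillo--Van Schaftingen (\cite{CvS2009}, Theorem 1) --- so the relevant benchmark is their proof, which is entirely different from yours and much more elementary. They adapt Van Schaftingen's slicing argument to stratified groups: the pairing $\scal{G}{\Phi}$ is estimated slice by slice along cosets of a codimension-one subgroup (of homogeneous dimension $Q-1$), the divergence-free condition is used to trade the derivative transverse to the slices for horizontal derivatives along them, and the supercritical Morrey--Sobolev embedding on the slices (exponent $Q$ against dimension $Q-1$) supplies the $L^\infty$ control. No Littlewood--Paley decomposition, no duality, no stopping-time construction appears anywhere in that proof.

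Your proposal, by contrast, follows the Bourgain--Brezis duality route, and its central step --- the decomposition $\Phi=\Phi_\infty+\nabla_{\he{}}h$ with $\|\Phi_\infty\|_\infty+\|\nabla_{\he{}}h\|_{L^Q}\le C\|\nabla_{\he{}}\Phi\|_{L^Q}$ --- is precisely where the proof is missing rather than merely technical. Since divergence-free $L^1$ fields are exactly the annihilators of horizontal gradients, by Hahn--Banach this decomposition is essentially dual to (indeed at least as strong as) the inequality you are trying to prove, so assuming it begs the question; and on $\he n$ it is genuinely hard: it is, in substance, the content of Wang--Yung's subelliptic Bourgain--Brezis theorem, which this very paper cites as ``a stronger version of this result'' (\cite{wang_yung}, Theorem 1.9). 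Your sketch of its proof (heat-semigroup Littlewood--Paley pieces, Bernstein inequalities, a thresholding/stopping-time selection) is a research program, not an argument --- you yourself flag the stopping-time step as the principal obstacle, and the delicate nonlinear selection in Bourgain--Brezis does not transfer by routine spectral-multiplier technology. A secondary but real gap: since $G\in L^1$ and $\nabla_{\he{}}h\in L^Q$ only, the pairing $\scal{G}{\nabla_{\he{}}h}$ is not absolutely convergent, and the claimed ``standard truncation/mollification'' does not show it vanishes: approximating $\nabla_{\he{}}h$ in $L^Q$ by gradients of test functions is not enough to pass to the limit against a mere $L^1$ density, so the vanishing must come packaged with extra structure from the decomposition itself. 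In short, the approach is coherent but reduces the theorem to a deeper unproved result, whereas the cited Chanillo--Van Schaftingen proof is short, self-contained, and avoids that machinery altogether.
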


We notice that a stronger version of this result can be found in \cite{wang_yung}, Theorem 1.9.

\medskip

As in the Euclidean case, estimates similar to Theorem  \ref{chanillo_van} still hold when the condition on the divergence is replaced by a condition on higher-order derivatives \cite{van_schaftingen_JEMS}. 
Similar ideas have been applied in nilpotent homogeneous groups 
by S. Chanillo and J. Van Schaftingen as follows.

Let $k\ge 1$ be fixed, and let $G\in 
L^1(\he n, \otimes^k \mathfrak h_1 )$ belong to the space of horizontal $k$-tensors.
We can write
$$
G = \sum_{i_1,\dots,i_k} G_{i_1,\dots,i_k} W_{i_1}\otimes \cdots \otimes W_{i_k}.
$$
We remind that $G$ can be identified with the differential operator
$$
u\to Gu := \sum_{i_1,\dots,i_k} G_{i_1,\dots,i_k} W_{i_1}\cdots W_{i_k} u.
$$
Denoting  by $\mc D(\he n, \mathrm{Sym} (\otimes^k \mathfrak h_1 ))$ the subspace
of compactly supported smooth symmetric horizontal $k$-tensors,
we have:

\begin{theorem}[\cite{CvS2009}, Theorem 5]\label{chanillo_van5} Let $k\ge 1$ and
 $$G\in 
L^1(\he n, \otimes^k \mathfrak h_1 ), \quad \Phi \in \mc D(\he n, \mathrm{Sym} (\otimes^k\mathfrak h_1 )).
$$

Suppose
that
$$
\sum_{i_1,\dots,i_k}  W_{i_k}\cdots W_{i_1} G_{i_1,\dots,i_k} = 0 \quad\mbox{in $\mc D'(\he n)$}.
$$
Then
$$
\Big| \int_{\he n}\scal{\Phi}{G}\, dp \Big| \le C 
\|G\|_{L^1(\he n, \otimes^k \mathfrak h_1)} \|\nabla_{\he{}} \Phi\|_{L^Q(\he n, \otimes^k \mathfrak h_1)}.
$$
\end{theorem}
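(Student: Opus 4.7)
The plan is to follow the Hardy--$\mathrm{BMO}$ duality template developed by Van Schaftingen for the Euclidean case and adapted to the Heisenberg setting in \cite{CvS2009}. The hypothesis $\|\nabla_{\he{}}\Phi\|_{L^Q}<\infty$, together with the fact that $Q=2n+2$ is the critical Sobolev exponent on $\he n$, will place (each component of) $\Phi$ in a $\mathrm{BMO}$-type space; the higher-order cancellation condition on $G$ will upgrade its $L^1$-integrability to a Hardy-type integrability when paired with $\Phi$. The pairing is then controlled by the product of these two norms.

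I would first reduce to smooth $G$ by convolution with a left-invariant approximate identity (which preserves the cancellation hypothesis since the cancellation operator is left-invariant). Next, perform a Whitney decomposition of $\he n$ into Heisenberg balls $B_j=B(p_j,r_j)$ with bounded overlap, and split $G=\sum_j G_j$ with $G_j$ supported in a fixed dilate of $B_j$ and $\sum_j\|G_j\|_{L^1}\le C\|G\|_{L^1}$. On each ball $B_j$, I would approximate $\Phi$ by a symmetric horizontal $k$-tensor $P_j$ whose entries are left-invariant horizontal polynomials of Heisenberg degree strictly less than $k$, obtained as a Heisenberg-adapted symmetric Taylor expansion of $\Phi$ at $p_j$. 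The cancellation hypothesis then forces
$$
\int_{\he n}\scal{P_j}{G}\,dp = 0,
$$
up to summable commutator terms, because the operator $\sum W_{i_k}\cdots W_{i_1}$ acting distributionally on $G$ annihilates horizontal polynomials of Heisenberg degree $<k$ after integration by parts. Combined with the sub-elliptic Poincar\'e--Sobolev estimate
$$
\|\Phi-P_j\|_{L^\infty(B_j)}\le C\|\nabla_{\he{}}\Phi\|_{L^Q(cB_j)},
$$
in which the scale factor $r_j$ drops out because $Q$ is the homogeneous dimension, this gives $|\int\scal{\Phi}{G_j}\,dp|\le C\|G_j\|_{L^1}\|\nabla_{\he{}}\Phi\|_{L^Q(cB_j)}$; summing in $j$ and using bounded overlap concludes the proof.

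The main obstacle lies in the algebraic construction of $P_j$ and in the control of the commutator error terms. Because $[X_i,Y_j]=\delta_{ij}T$, the operator $W_{i_k}\cdots W_{i_1}$ is not symmetric in its indices, so only its fully symmetric part annihilates $G$ under the cancellation hypothesis; this is precisely why the statement requires $\Phi$ to be a symmetric tensor and forces $P_j$ to be built from symmetrized higher horizontal derivatives of $\Phi$ at $p_j$. The commutator terms that appear in the iterated integration-by-parts carry an additional vertical derivative $T$ of Heisenberg weight $2$, so they are formally of lower order in the horizontal sense but of higher order in the $T$-direction; checking that, after rescaling on each Whitney ball, these remainders stay summable and are absorbed by the Sobolev--Poincar\'e control of $\Phi$, is the technical heart of the argument.
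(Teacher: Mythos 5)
There is a genuine gap --- in fact two, and they sit exactly where the difficulty of the theorem lies. First, your key analytic step, the scale-invariant bound $\|\Phi-P_j\|_{L^\infty(B_j)}\le C\|\nabla_{\he{}}\Phi\|_{L^Q(cB_j)}$, is false: $p=Q$ is precisely the critical exponent at which the Morrey-type embedding of the horizontal Sobolev space into $L^\infty$ fails, and control of $\nabla_{\he{}}\Phi$ in $L^Q$ yields only BMO/exponential-Orlicz oscillation bounds for $\Phi-P_j$, with a logarithmic divergence in any attempted sup bound. The fact that the radius $r_j$ ``drops out'' is not evidence for the inequality; it is the signature of the borderline case where it breaks. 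If such a sup bound held, the theorem would follow by soft arguments, and the Bourgain--Brezis phenomenon would not exist: as Bourgain--Brezis and Van Schaftingen stress, a divergence-free (or higher-order cancelling) $L^1$ field need \emph{not} lie in the Hardy space, so the $\mc H^1$--BMO duality template announced in your opening paragraph cannot be the mechanism of proof. Second, the cancellation bookkeeping does not close. The hypothesis, tested distributionally, kills $\int_{\he n}\scal{P_j}{G}\,dp$ only when $P_j$ is paired against the \emph{full} $G$ --- and even that pairing is problematic, since $P_j$ is an unbounded polynomial, $G$ is merely $L^1$, and the hypothesis tests only against $\mc D(\he n)$ (nor is a symmetric polynomial tensor automatically a $k$-th horizontal derivative tensor $W_{i_1}\cdots W_{i_k}\psi$ of a single admissible potential $\psi$). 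Your final estimate, however, requires controlling $\int\scal{P_j}{G_j}\,dp$ piece by piece, and the truncated pieces $G_j$ satisfy no cancellation condition at all: the ``summable commutator terms'' you defer are the uncontrolled main term, not an error term.

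For calibration: the paper you are reading does not prove this statement; it imports it verbatim from Chanillo--Van Schaftingen \cite{CvS2009}, and their argument is structurally different from yours. They never localize $G$ by a Whitney or Calder\'on--Zygmund decomposition. Following Van Schaftingen's Euclidean scheme, the pairing is estimated slice by slice along a horizontal coordinate: the cancellation hypothesis, used \emph{globally} by integrating the differential identity over a half-space adapted to the slicing, converts the codimension-one pairing into a bulk pairing of $G$ against horizontal derivatives of a smoothed copy of $\Phi$; critical Sobolev--Gagliardo--Nirenberg embeddings on slices control the unsmoothed remainder, and optimization in the smoothing scale produces the $L^1\times\dot W^{1,Q}$ bound, with an induction on $k$ handling the higher-order condition of their Theorem 5. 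Two of your side remarks are sound --- left mollification $\eta_\eps\ast G$ does preserve the constraint, since left-invariant operators satisfy $W^I(\eta_\eps\ast G)=\eta_\eps\ast W^IG$, and the symmetry assumption on $\Phi$ is indeed forced because only the symmetrized part of $W_{i_k}\cdots W_{i_1}$ is constrained --- but neither repairs the two gaps above.
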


\section{Main algebraic step}
\label{main}

As in \cite{BF7}, our proof of Theorem \ref{Hn} relies  on  the fact 
(precisely stated in Theorem \ref{pierre} below) that the components 
with respect to a given basis of closed forms in $E_0^h$
can be viewed as the components of a horizontal vector field  with vanishing horizontal
divergence if $h\neq n,n+1$ or vanishing ``generalized horizontal divergence'' if
$h=n, n+1$.  More precisely, we have:

\begin{theorem}\label{pierre} Let $\alpha=\sum_J \alpha_J\xi_J^h\in \Gamma(E_0^h)$,
 $1\le h\le 2n$, be such that
$$
d_c\alpha = 0.
$$
Then
\begin{itemize}
\item if $ h \neq n$
then each component $\alpha_J$ of $\alpha$, $J=1,\dots, \dim E_0^h$, can be written as 
$$
\alpha_J = \sum_{I=1}^{\dim E_0^{h+1}} \sum_{i=1}^{2n} b_{i,I}^J G_{I,i},
$$
where the $b_{i,I}^J $'s are real constants and for any $I=1,\dots,\dim E_0^{h+1}$ the $G_{I,i}$'s are the components
of a horizontal vector  field
$$
G_I=\sum_i G_{I,i}W_i 
$$
with
$$
\sum_i W_i G_{I,i}=0, \quad I=1,\dots,\dim E_0^{{h+1}}.
$$
Moreover there exist a geometric constant $C>0$ such that for $I=1,\dots,\dim E_0^{{h+1}}$
and $1\le p\le\infty$
\begin{equation}\label{reverse 1}
\|G_I\|_{L^p(\he n, \veth{1})} \le C \|\alpha\|_{L^p(\he n, E_0^h)}.
\end{equation}

\item If $h=n$,
then each component $\alpha_J$ of $\alpha$, $J=1,\dots, \dim E_0^{n}$, can be written as 
$$
\alpha_J = \sum_{I=1}^{\dim E_0^{{n+1}}} \sum_{i,j} b_{i,j,I}^J (G_I^{\mathrm{Sym}})_{i,j}.
$$
Here the $b_{i,j,I}^J $'s are real constants and for any $I=1,\dots,\dim E_0^{n+1}$ the $(G_I^{\mathrm{Sym}})_{i,j}$'s are the components
of the symmetric part (see Proposition \ref{sym deco}) of the 2-tensor
$$
G_I=\sum_i G_{I,i,j}\, W_i \otimes W_j
$$
that satisfies
$$\sum_{i,j} W_i W_j \,G_{I,i,j}=0,
\quad I=1,\dots, \dim E_0^{n+1}.
$$
Moreover there exist a geometric constant $C>0$ such that for $I=1,\dots,\dim E_0^{{n+1}}$
and $1\le p\le\infty$
\begin{equation}\label{reverse 2}
\|G_I\|_{L^p(\he n, \otimes^2\mathfrak h_1)} \le C \|\alpha\|_{L^p(\he n, E_0^n)}.
\end{equation}
\end{itemize}
\end{theorem}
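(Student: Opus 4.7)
The plan is to reduce the claim, via the coordinate formula for $d_c$ recorded just before Definition \ref{symbol}, to the injectivity of the symbol $\sigma(d_c)$ (respectively of its symmetric part $\Sigma(d_c)$ when $h=n$) at the identity; the latter being the content of Corollary \ref{irreducible}. In the case $h\ne n$, write $\alpha=\sum_k\alpha_k\xi_k^h$, so that
\begin{equation*}
d_c\alpha=\sum_{I,k}(P_{I,k}\alpha_k)\,\xi_I^{h+1},\qquad P_{I,k}=\sum_i F_{I,k,i}W_i,
\end{equation*}
and the hypothesis $d_c\alpha=0$ reads $\sum_{i,k}F_{I,k,i}\,W_i\alpha_k=0$ for every $I$. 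Setting
\begin{equation*}
G_{I,i}:=\sum_k F_{I,k,i}\alpha_k,\qquad G_I:=\sum_i G_{I,i}W_i\in\mathfrak h_1,
\end{equation*}
one immediately has $\sum_i W_iG_{I,i}=(d_c\alpha)_I=0$, and the pointwise bound $|G_{I,i}|\le C\,|\alpha|$ gives \eqref{reverse 1}. To recover $\alpha_J$, observe that the matrix representation of $\sigma(d_c)(e):E_0^h\to\mathfrak h_1\otimes E_0^{h+1}$ in the natural bases has entries $F_{I,k,i}$; by Corollary \ref{irreducible} this matrix has trivial kernel, so its transpose is surjective, and for each $J$ there exist real constants $b_{i,I}^J$ with $\sum_{I,i}F_{I,k,i}\,b_{i,I}^J=\delta_k^J$. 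Substitution yields $\alpha_J=\sum_{I,i}b_{i,I}^J G_{I,i}$, as required.

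The case $h=n$ is handled analogously. Now $P_{I,k}=\sum_{i,j}F_{I,k,i,j}W_iW_j$ and $d_c\alpha=0$ translates to $\sum_{i,j,k}F_{I,k,i,j}W_iW_j\alpha_k=0$. Defining
\begin{equation*}
G_{I,i,j}:=\sum_k F_{I,k,i,j}\alpha_k,\qquad G_I:=\sum_{i,j}G_{I,i,j}\,W_i\otimes W_j,
\end{equation*}
the identity $\sum_{i,j}W_iW_jG_{I,i,j}=0$ is automatic and \eqref{reverse 2} is again immediate. The symmetric part has components $(G_I^{\mathrm{Sym}})_{i,j}=\sum_k\tilde F_{I,k,i,j}\alpha_k$, with $\tilde F$ as in \eqref{simmetrico}. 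Injectivity of the symmetric symbol $\Sigma(d_c)(e):E_0^n\to\mathrm{Sym}(\otimes^2\mathfrak h_1)\otimes E_0^{n+1}$, once more provided by Corollary \ref{irreducible}, delivers constants $b_{i,j,I}^J$ satisfying $\sum_{I,i,j}\tilde F_{I,k,i,j}\,b_{i,j,I}^J=\delta_k^J$, whence $\alpha_J=\sum_{I,i,j}b_{i,j,I}^J(G_I^{\mathrm{Sym}})_{i,j}$.

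The hard part is thus not Theorem \ref{pierre} itself but its inputs. First, one needs the existence of a coordinate presentation of $d_c$ with constant coefficients $F_{I,k,i}$ (resp.\ $F_{I,k,i,j}$), which relies on the joint left-invariance of $\mathfrak h_1$, $E_0^*$ and $d_c$ established in Proposition \ref{invariance}; this is precisely what allows us to reduce the injectivity question to the fiber over $e$. Second, and more substantially, one needs Corollary \ref{irreducible}: the injectivity of $\sigma(d_c)(e)$ and of $\Sigma(d_c)(e)$. This is the genuine algebraic heart of Section \ref{main}, and is expected to proceed by recognizing the fibers $E_0^h$ as irreducible modules under the natural symplectic/unitary action preserving $d\theta$, so that an $\mathrm{Sp}$-equivariant symbol map is either zero or injective, and then ruling out the trivial case on the individual Rumin spaces.
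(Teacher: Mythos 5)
Your proposal is correct and follows essentially the same route as the paper: the paper also passes to the constant-coefficient coordinate presentation of $d_c$, deduces injectivity of $\sigma(d_c)(e)$ (resp.\ $\Sigma(d_c)(e)$) from Proposition \ref{invariance} combined with the irreducibility in Corollary \ref{irreducible}, and then recovers $\alpha_J$ from the $G_{I,i}$ (resp.\ $(G_I^{\mathrm{Sym}})_{i,j}$), the only cosmetic difference being that the paper packages your ``transpose is surjective'' step as an explicit left inverse $B_h\in\Hom(\mathfrak h_1\otimes E_0^{h+1},E_0^h)$ (resp.\ $B_n$) applied to $\sigma(d_c)\alpha$ (resp.\ $\Sigma(d_c)\alpha$). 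One small citation slip in your commentary: the constant-coefficient presentation comes from left-invariance of $d_c$ and the left-invariant bases of Lemma \ref{left invar}, not from Proposition \ref{invariance}, whose role is precisely the $Sp_{2n}(\R)$-invariance of the kernels that you correctly invoke for injectivity.
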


The proof of this theorem requires several
preliminary steps. First of all, 
we want to prove that the exterior differential $d_c$ is invariant (i.e. ``natural'') under the action
of a class of intrinsic transformation of $\he n$.

%

%
%

\begin{theorem}\label{natural} If $A$ belongs to the symplectic group $Sp_{2n}(\R)$, we associate with $A$
the real  $(2n+1)\times (2n+1)$ matrix
\begin{equation}
 \label{associated matrix}
f_A : \mathfrak h \to \mathfrak h, \quad f_A= \left( \begin{array}{cc} 
A_{2n\times 2n} &  0_{2n\times 1}  \\
0_{1\times 2n}  &  1      
\end{array} \right) \quad.
\end{equation}
Then
\begin{itemize}
\item[i)] $f_A(\mathfrak h_1) = \mathfrak h_1$;
\item [ii)] $f_A$ induces a homogeneous group isomorphism $\exp\circ f_A\circ \exp^{-1} $ still
denoted by $f_A$ such that
$f_A:\he n\to \he n$;
\item[iii)] $
f_A^* : \Gamma(E_0^*) \to \Gamma(E_0^*);
$
\item[iv)] for any $h$-form $\alpha\in \Gamma(E_0^h)$
$$d_c (f_A^*\alpha)= f_A^*(d_c \alpha);$$
\end{itemize}
\end{theorem}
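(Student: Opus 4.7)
The plan is to dispatch the four claims in order, with the real content concentrated in (iii) and (iv).

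Claim (i) is immediate from the block form (\ref{associated matrix}): since $f_A$ acts as $A$ on the span of $W_1,\dots,W_{2n}=\mfrak h_1$ and fixes $T$, we have $f_A(\mfrak h_1)=\mfrak h_1$ and $f_A(\mfrak h_2)=\mfrak h_2$. For (ii), I would first check that $f_A$ is a Lie algebra automorphism of $\mfrak h$. The only nontrivial brackets lie among horizontal vectors and are encoded by $[v,w]=-d\theta(v,w)\,T$ for $v,w\in\mfrak h_1$; since $A\in Sp_{2n}(\R)$ preserves $d\theta|_{\mfrak h_1}$ and $f_A$ fixes $T$, the bracket is preserved. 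Because $\he n$ is connected, simply connected and nilpotent, $\exp$ transports $f_A$ to a group automorphism of $\he n$, which in the exponential coordinates used throughout the excerpt is again represented by the same linear map. The fact that $f_A$ respects the grading of $\mfrak h$ also implies that it commutes with the Heisenberg dilations, so the resulting group automorphism is homogeneous.

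For (iii), I would exploit that a group automorphism satisfies $f_A\circ\tau_p=\tau_{f_A(p)}\circ f_A$, which forces $f_A^*$ to send left-invariant forms to left-invariant forms; its action is therefore determined at the origin by the dual of $f_A|_{\mfrak h}$. Since $f_A$ respects the splitting $\mfrak h=\mfrak h_1\oplus\mfrak h_2$ and fixes $T$, one obtains $f_A^*\theta=\theta$ and hence $f_A^*d\theta=d\theta$, so in particular $f_A^*$ commutes with the Lefschetz operator $L=d\theta\wedge\cdot$. Primitive forms $P^h$ with $h\le n$ admit the purely symplectic characterization $\{\alpha\in\covh h:L^{n-h+1}\alpha=0\}$, which is therefore preserved by $f_A^*$. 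In the high-degree range $n<h\le 2n+1$ one has $f_A^*(\beta\wedge\theta)=(f_A^*\beta)\wedge\theta$ with $L(f_A^*\beta)=f_A^*(L\beta)=0$. Definition \ref{rumin in H} then yields $f_A^*E_0^h\subseteq E_0^h$ in every degree.

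For (iv), I would appeal to the intrinsic character of Rumin's construction: the operator $d_c$ is assembled from the exterior derivative $d$ together with $\theta$, $d\theta$, and canonical projections and lifts built out of those two forms. Pullback by any diffeomorphism commutes with $d$, and by (iii) $f_A^*$ fixes $\theta$ and $d\theta$ and preserves $E_0^*$, so $f_A^*$ should commute with each building block of the construction. Concretely, for $h\neq n$ one may describe $d_c$ as follows: given $\alpha\in E_0^h$, extend it to a smooth form $\tilde\alpha\in\covH h$ and project $d\tilde\alpha$ onto $E_0^{h+1}$ modulo the ideal generated by $\theta$ and $d\theta$; for $h=n$ one must first correct $\alpha$ by a multiple of $\theta$ (determined by $d\theta$) so that the differential of the corrected form becomes horizontal, and then project. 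The main obstacle will be to verify that each of these intermediate lifts and projections is itself $f_A^*$-equivariant; this is the step where the invariances $f_A^*\theta=\theta$ and $f_A^*d\theta=d\theta$ are used, together with the naturality of $d$. Once the equivariance of every step of Rumin's construction is checked degree by degree, the identity $f_A^*d_c=d_cf_A^*$ on each $E_0^h$ drops out.
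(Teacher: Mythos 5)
Your argument is correct, but it is genuinely more self-contained than what the paper offers: the paper proves Theorem \ref{natural} by pure citation (``See \cite{rumin_pa} or \cite{FT4}''), i.e.\ it relies on Rumin's general result that $d_c$ is natural under arbitrary contact diffeomorphisms --- a fact the authors invoke again verbatim just before Proposition \ref{invariance}. You instead verify naturality by hand for the special maps $f_A$: you check that $f_A$ is a graded Lie algebra automorphism (via the identity $[v,w]=-d\theta(v,w)\,T$ on $\mfrak h_1$ and the fact that $A\in Sp_{2n}(\R)$ preserves $d\theta\vert_{\mfrak h_1}$), hence exponentiates to a homogeneous group automorphism on the connected, simply connected nilpotent group $\he n$; you then observe $f_A^*\theta=\theta$ and $f_A^*d\theta=d\theta$, so that $f_A^*$ commutes with $d$, with wedging by $\theta$ and $d\theta$, and with the Lefschetz operator $L$ --- which, combined with the characterizations $P^h=\ker L^{n-h+1}$ and $E_0^h=\{\beta\wedge\theta,\ L\beta=0\}$ of Definition \ref{rumin in H}, gives iii); and finally you argue that every lift and projection entering Rumin's construction of $d_c$ is $f_A^*$-equivariant, including the correction by the unique multiple of $\theta$ at $h=n$, whose uniqueness makes its equivariance automatic. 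The trade-off is clear: the paper's citation route is shorter and yields equivariance under \emph{all} contactomorphisms, while your route is elementary and makes explicit precisely which invariances ($f_A^*\theta=\theta$, $f_A^*d\theta=d\theta$, and the commutation $[\otimes^hA,L]=0$ of Proposition \ref{varia}, vi), which guarantees the Lefschetz projections are equivariant) do the work --- the same identities on which the paper's subsequent symbol computation in Proposition \ref{invariance} rests. The only place where your write-up is a sketch rather than a proof is the closing degree-by-degree check in iv); but since you have already established that $f_A^*$ preserves $E_0^*$, fixes $\theta$ and $d\theta$, and commutes with $d$ and $L$, that verification is routine bookkeeping, so there is no genuine gap.
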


\begin{proof} See \cite{rumin_pa} or  \cite{FT4}.

\end{proof}

\medskip


The next step consists in proving that the symbols $\sigma(d_c)$ and
$\Sigma(d_c)$ are injective. 

First of all, we remind that, by \cite{weil}, Chapter I, Theorem 3 and Corollary at p. 28), the following 
proposition holds:
\begin{proposition}\label{varia} Let $P^h$ be space of primitive forms defined in \eqref{al1}. Then
\begin{itemize}
\item[i)] if $1\le h \le 2n$, then the following orthogonal decomposition holds:
$$
\covh h = \bigoplus_{i\ge (n-h)^+} L^i (P^{h-2i});
$$
\item[ii)] $P^h = \{0\}$ if $h>n$;
\item[iii)] the map $L^{n-h}: \covh h \to \covh{2n-h}$ is a linear isomorphism;
\item[iv)] if $h\le n$, then $P^h = \ker L^{n-h+1}$;
\item[v)] the map $L^{n-h}: P^h \to \covh{2n-h}\cap \ker L$ is a linear isomorphism;
\item[vi)] a symplectic map $A\in Sp_{2n}(\mathbb R)$ commutes with $L$, i.e.
if $1\le h\le 2n-2$, then $[\otimes^h A, L]=0$.
\end{itemize}
\end{proposition}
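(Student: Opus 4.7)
The plan is to derive items (i)--(v) as consequences of a single structural fact: the operators $L$, $\Lambda$ and the weight operator $H$ defined by $H|_{\covh h} := (h-n)\,\mathrm{Id}$ generate an $\mathfrak{sl}_2(\R)$-action on the symplectic exterior algebra $\covh *$. Item (vi) is handled separately by a direct calculation.

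First I would fix a symplectic orthonormal basis (the $dx_i, dy_i$ already at hand) and verify by direct computation that $[L,\Lambda] = H$, $[H,L] = 2L$, and $[H,\Lambda] = -2\Lambda$; this is the only nontrivial calculation in the whole argument. Once the $\mathfrak{sl}_2$-triple is in place, Weyl's complete reducibility decomposes $\covh *$ into a direct sum of irreducible finite-dimensional summands, each of which is the $L$-orbit of a unique (up to scalar) lowest-weight vector killed by $\Lambda$. By construction, these lowest-weight vectors in $\covh h$ are exactly the primitive forms $P^h$, sitting at weight $h-n$; since lowest weights in finite-dimensional $\mathfrak{sl}_2$-representations are non-positive, this forces $h-n \le 0$ whenever $P^h \neq 0$, which is (ii). Regrouping the irreducible summands by degree then yields the Lefschetz decomposition (i), the range of $i$ encoding exactly the conditions that $P^{h-2i}$ be nonzero and that the $L$-chain starting from $P^{h-2i}$ reach degree $h$.

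The standard description of the irreducible $\mathfrak{sl}_2$-representation generated by a primitive $\beta \in P^k$ as a chain $\beta, L\beta, \ldots, L^{n-k}\beta$ of length $n-k+1$ with $L^{n-k+1}\beta = 0$ yields (iv) and (v) at once. For (iv), applying $L^{n-h+1}$ to a Lefschetz-decomposed $\alpha = \sum_i L^i \beta_i$ with $\beta_i \in P^{h-2i}$ lands the components in distinct Lefschetz summands of $\covh{2n-h+2}$, hence all $L^{n-h+1+i}\beta_i$ must vanish; for $i=0$ this is automatic from the chain length, while for $i \ge 1$ the inequality $n-h+1+i \le n-(h-2i)$ shows that $L^{n-h+1+i}$ is injective on $P^{h-2i}$, forcing $\beta_i=0$. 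For (v), $L^{n-h}$ restricted to $P^h$ sends lowest weight to highest weight in a single irreducible chain, hence is injective with image inside $\ker L$; surjectivity onto $\covh{2n-h}\cap\ker L$ follows because the latter is precisely the span of highest-weight vectors in degree $2n-h$, whose dimension matches $\dim P^h$ by $\mathfrak{sl}_2$-symmetry. Part (iii) then assembles from (i) and (v): $L^{n-h}$ maps each summand $L^i P^{h-2i}$ of $\covh h$ isomorphically onto $L^{n-h+i} P^{h-2i}$, and these span the Lefschetz decomposition of $\covh{2n-h}$.

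For (vi), I would simply note that by definition a symplectic $A \in Sp_{2n}(\R)$ satisfies $(\otimes^2 A)\,d\theta = d\theta$; since the induced action on exterior powers extends multiplicatively, the identity $(\otimes^{h+2} A)(d\theta \wedge \alpha) = d\theta \wedge (\otimes^h A)\alpha$ is immediate, which is exactly $[\otimes^h A, L] = 0$. The main obstacle in the whole proof is the initial $\mathfrak{sl}_2$-commutator identity $[L,\Lambda] = H$; everything else is a formal consequence of the representation theory of a single $\mathfrak{sl}_2$-triple.
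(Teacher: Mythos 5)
Your proof is correct and follows essentially the same route as the source the paper cites for this statement (Weil, Chapter I) --- the paper gives no proof of its own beyond that citation --- namely the $\mathfrak{sl}_2(\R)$-triple $(L,\Lambda,H)$ with $[L,\Lambda]=H$, complete reducibility, and the chain-length facts ($L^j$ injective on $P^k$ iff $j\le n-k$, $L^{n-k+1}P^k=0$) for items (i)--(v), together with multiplicativity of the pullback on the preserved form $d\theta$ for (vi); your inequality checks in (iv), (v), (iii) are all sound. One point worth recording: your characterization of the index range in (i) (``$P^{h-2i}\neq 0$ and the $L$-chain reaches degree $h$'') amounts to $i\ge (h-n)^+$, which silently corrects what appears to be a typo, $(n-h)^+$, in the paper's statement of the proposition.
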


The injectivity of the symbols will follow from the following result.

\begin{proposition}\label{irreducible 1} The symplectic group
$Sp_{2n}(\R)$ acts irreducibly on $P^h$ for $h=1,\dots,n$
and on $\ker L$ for $h=n+1,\dots,2n-1$.
\end{proposition}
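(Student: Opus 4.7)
\medskip

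\textbf{Proof proposal.} My plan is first to reduce both statements to a single one about primitive covectors, and then to invoke highest-weight theory.

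The second case reduces to the first. For $h\in\{n+1,\dots,2n-1\}$, set $k:=2n-h$, so $1\le k\le n-1$. By Proposition~\ref{varia}(v) the Lefschetz power
\[
L^{n-k}\colon P^k \;\longrightarrow\; \ker L \cap \covh{2n-k}=\ker L\cap \covh{h}
\]
is a linear isomorphism. Part (vi) of the same proposition says that every $A\in Sp_{2n}(\R)$ commutes with $L$, hence with $L^{n-k}$, so this isomorphism is $Sp_{2n}(\R)$-equivariant. Thus it suffices to prove that $Sp_{2n}(\R)$ acts irreducibly on $P^h$ for $1\le h\le n$.

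For this I would complexify and use highest-weight theory for $Sp_{2n}(\C)$. After complexification, $P^h\otimes\C$ is an invariant subspace of $\covh{h}\otimes\C=\bigwedge^{h}\C^{2n}$ under the standard action of $Sp_{2n}(\C)$, and it suffices (since $P^h$ is defined over $\R$ and the representation is of real type, being a subrepresentation of a real wedge product) to prove that $P^h\otimes\C$ is irreducible under $Sp_{2n}(\C)$. Take the Cartan subalgebra diagonal in the Darboux basis $\{X_i,Y_i\}$ associated with the symplectic form $-d\theta=\sum_i\omega_i\wedge\omega_{n+i}$; the weights of $\C^{2n}$ are $\pm e_1,\dots,\pm e_n$. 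The covector
\[
\eta_h:=\omega_1\wedge\omega_2\wedge\cdots\wedge\omega_h
\]
is a weight vector of weight $e_1+\cdots+e_h$, this is the highest weight in the dominant chamber, and a direct computation gives $\Lambda\eta_h=0$ (no pair $(i,n+i)$ appears among the indices $1,\dots,h$ when $h\le n$). Thus $\eta_h\in P^h\otimes\C$ and generates, under the enveloping algebra of $\mfrak{sp}_{2n}(\C)$, the irreducible representation $V(\varpi_h)$ with the $h$-th fundamental weight $\varpi_h$. A dimension count (comparing $\dim P^h=\binom{2n}{h}-\binom{2n}{h-2}$ with the Weyl dimension of $V(\varpi_h)$; see e.g. Fulton--Harris, Chapter 17) shows $P^h\otimes\C\cong V(\varpi_h)$, which is irreducible.

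The main obstacle, as I see it, is passing between real and complex irreducibility: one must check that $P^h\otimes\C$ being irreducible as an $Sp_{2n}(\C)$-module actually implies $P^h$ is irreducible as an $Sp_{2n}(\R)$-module. This is true because the representation is of real type (the intertwining $\C$-antilinear map is complex conjugation with respect to the real structure $P^h\subset P^h\otimes\C$), so any real invariant subspace would complexify to a complex invariant subspace. An entirely alternative route, avoiding complexification, would be to use that $Sp_{2n}(\R)$ acts transitively on the Grassmannian of isotropic $h$-planes (for $h\le n$), observe that decomposable primitive covectors $\omega_{i_1}\wedge\cdots\wedge\omega_{i_h}$ coming from isotropic flags span $P^h$, and then invoke a Schur-type argument with the $\mfrak{sl}_2$-triple $(L,\Lambda,H)$ of Proposition~\ref{varia}; but the highest-weight route above is more direct and is the one I would actually write up.
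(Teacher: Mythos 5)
Your proposal is correct, and its second half is exactly the paper's argument: the paper also handles $h=n+1,\dots,2n-1$ by transporting irreducibility through the $Sp_{2n}(\R)$-equivariant isomorphism $L^{n-k}\colon P^{k}\to\ker L\cap\covh{h}$ ($k=2n-h$) furnished by Proposition~\ref{varia}, v) and vi). Where you genuinely diverge is the core case $1\le h\le n$: the paper disposes of it with a bare citation to \cite{bourbaki_lie_7_8}, p.~203, whereas you reprove that fact by highest-weight theory, exhibiting $\eta_h=\omega_1\wedge\cdots\wedge\omega_h$ as a primitive highest-weight vector of weight $\varpi_h$ and matching $\dim P^h=\binom{2n}{h}-\binom{2n}{h-2}$ against the Weyl dimension of $V(\varpi_h)$. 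The trade-off is clear: the citation is shorter, while your route is self-contained and identifies $P^h$ concretely as the $h$-th fundamental representation, which is more informative. Three points would tighten the write-up. First, the ``real type'' discussion is superfluous: the implication you need (complex irreducibility of $P^h\otimes\C$ forces real irreducibility of $P^h$) is exactly the one-line complexification argument you already give, valid for any real form of any representation; real type matters only for the converse implication, which you never use. Second, you pass silently from $Sp_{2n}(\R)$-invariance of a subspace of $P^h\otimes\C$ to $\mfrak{sp}_{2n}(\C)$-invariance; this needs a word (Zariski density of $Sp_{2n}(\R)$ in $Sp_{2n}(\C)$, or differentiate and complexify the real Lie algebra). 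Third, invariance of $\ker\Lambda$ itself under the noncompact group is not immediate from $\Lambda$ being the metric adjoint of $L$, since $\otimes^h A$ is not orthogonal; the clean justification is Proposition~\ref{varia}, iv), which rewrites $P^h=\ker L^{n-h+1}$, manifestly invariant because $A$ commutes with $L$ by part vi). With these small repairs your argument is complete and faithful in structure to the paper's, differing only in that it proves, rather than cites, the Bourbaki irreducibility statement.
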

\begin{proof}
If $1\le h\le n$, then the statement is proved in \cite{bourbaki_lie_7_8}, p. 203.
Suppose now $h > n$. If $A\in Sp_{2n}(\mathbb R)$ and $\alpha\in \ker L$, then 
by Proposition \ref{varia}, vi), $\otimes^hA L
\in \ker L$, so that $A$ acts on $\ker A$. On the other hand, if $V\subset \ker L\cap \covh h$
is invariant under the action of $Sp_{2n}(\mathbb R)$, then, by Proposition \ref{varia}, v) and vi),
$(L^{h-r})^{-1} V$ is also invariant under the action of $Sp_{2n}(\mathbb R)$, and the
assertion follows by the first part of the proof.
\end{proof}

Using Definition \ref{rumin in H}, we have:
\begin{corollary} \label{irreducible} If $A\in Sp_{2n}(\mathbb R)$ and $f_A$ is defined as in \eqref{associated matrix},
then $f_A$ acts irreducibly on
$E_0^h$ for $h=1,\dots,2n$.
\end{corollary}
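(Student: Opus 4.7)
The plan is to reduce the statement to Proposition \ref{irreducible 1} case by case, according to the two branches in Definition \ref{rumin in H}. First I would analyze the induced action of $f_A$ on covectors. Because the matrix \eqref{associated matrix} has block form, the dual map $f_A^*$ preserves the splitting $\covH{1} = \covh{1} \oplus \mathrm{span}(\theta)$: its matrix in the dual basis $\{dx_i, dy_j, \theta\}$ is the transpose, which is again block-diagonal with block $A^T$ on $\covh{1}$. Moreover $f_A^* \theta = \theta$: this is precisely the symplectic condition $A^T J A = J$ applied to the formula $\theta = dt - \frac12 \sum_j (x_j dy_j - y_j dx_j)$, since $f_A$ fixes $t$ in exponential coordinates. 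It follows that $f_A^*$ preserves $\covh{h}$, where it acts as $\otimes^h A^T$, and that it commutes with wedge-multiplication by $\theta$.

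For $1 \le h \le n$, $E_0^h = P^h$ by Definition \ref{rumin in H}. The map $A \mapsto \otimes^h A^T$ coincides, up to the involution $A \mapsto A^T$ of the group (under which $Sp_{2n}(\R)$ is stable), with the action considered in Proposition \ref{irreducible 1}. Irreducibility of $Sp_{2n}(\R)$ on $P^h$ therefore yields irreducibility of $\{f_A : A \in Sp_{2n}(\R)\}$ on $E_0^h$.

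For $n+1 \le h \le 2n$, I would introduce the linear isomorphism
$$\Phi : E_0^h \to \ker L \cap \covh{h-1}, \qquad \Phi(\beta \wedge \theta) = \beta,$$
whose well-definedness and bijectivity are immediate from Definition \ref{rumin in H}. Since $f_A^*(\beta \wedge \theta) = (f_A^* \beta) \wedge (f_A^* \theta) = (f_A^* \beta) \wedge \theta$ and $f_A^*$ preserves $\ker L$ (as it commutes with $L = d\theta \wedge \cdot$ when $f_A^*$ fixes $\theta$, hence fixes $d\theta$), the map $\Phi$ intertwines the $f_A^*$-action on $E_0^h$ with that on $\ker L \cap \covh{h-1}$. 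For $n+2 \le h \le 2n$, so $h-1 \in \{n+1,\ldots,2n-1\}$, the second half of Proposition \ref{irreducible 1} gives irreducibility directly. For the borderline case $h = n+1$, Proposition \ref{varia}(iv) identifies $\ker L \cap \covh{n} = P^n$, so the first half of Proposition \ref{irreducible 1} applies. In either subcase, transport through $\Phi$ gives irreducibility of $f_A$ on $E_0^h$.

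The argument is essentially bookkeeping on top of Proposition \ref{irreducible 1}; the only substantive checks are the symplectic identity $f_A^*\theta = \theta$ (which also accounts for the fact that $f_A$ is a well-defined group homomorphism of $\he n$) and the identification $\ker L \cap \covh{n} = P^n$ needed to cover the transitional degree $h=n+1$. Neither constitutes a real obstacle.
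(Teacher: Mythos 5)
Your proof is correct and takes essentially the same route the paper intends: the paper deduces the corollary directly from Definition \ref{rumin in H} together with Proposition \ref{irreducible 1}, which is exactly the case-by-case reduction you carry out. Your explicit checks --- the symplectic identity $f_A^*\theta=\theta$, the intertwining map $\beta\wedge\theta\mapsto\beta$, and the borderline degree $h=n+1$ handled via $\ker L\cap \covh{n}=P^n$ (Proposition \ref{varia}, iv) --- merely spell out the bookkeeping the paper leaves implicit.
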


Since $d_c$ is equivariant under all smooth contact transformations, it is in particular $Sp_{2n}(\R)$-equivariant. It follows that the kernels $\ker \sigma(d_c)$ and
$\ker \Sigma(d_c)$ are invariant subspaces for the action
of  $Sp_{2n}(\R)$, so that the injectivity will follow. In fact,
we have:

\begin{proposition}\label{invariance} Keeping in mind Definition \ref{symbol},
{if $1\le h \le 2n$ , $h\neq n$,} then $\ker \sigma (d_c)(e)$ is invariant under the action
of  $Sp_{2n}(\R)$, i.e. if $A\in Sp_{2n}(\R)$, 
 then we have:
$$
\mbox{if }\, \bar{\alpha}\in E_0^h \, \mbox{ and } \,\sigma(d_c)(e) (\bar{\alpha}) = 0
\quad\mbox{then}\quad
\sigma(d_c)(e) (( \otimes^h A)\ \bar{\alpha})=0.
$$
If $h=n$, then $\ker \Sigma (d_c)(e)$ is invariant under the action
of  $Sp_{2n}(\R)$, i.e., $A\in Sp_{2n}\R)$, 
 then we have:
$$
\mbox{if }\, \bar{\alpha}\in E_0^h \, \mbox{ and } \,\Sigma(d_c)(e) (\bar{\alpha}) = 0
\quad\mbox{then}\quad
\Sigma(d_c)(e) (( \otimes^h A)\ \bar{\alpha})=0.
$$
\end{proposition}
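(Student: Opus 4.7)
The strategy is to derive the invariance of $\ker\sigma(d_c)(e)$ (resp.\ $\ker\Sigma(d_c)(e)$) from the equivariance of $d_c$ itself under the symplectic action, established in Theorem \ref{natural}(iv). In fact we shall prove the stronger statement that $\sigma(d_c)(e)$ \emph{intertwines} the $Sp_{2n}(\mathbb R)$-actions on domain and target, so that the invariance of its kernel is automatic.

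Fix $A\in Sp_{2n}(\mathbb R)$ and let $f_A\colon\he n\to\he n$ be the associated Lie group automorphism from Theorem \ref{natural}. Since $f_A$ is linear in exponential coordinates, it fixes the identity $e$, and its differential $df_A(e)$ coincides with $f_A\colon\mathfrak h\to\mathfrak h$. By parts (i) and (iii) of Theorem \ref{natural}, $f_A$ preserves the horizontal layer $\mathfrak h_1$ and the pullback $f_A^*$ maps $\Gamma(E_0^k)$ to itself for every $k$; at the fiber over $e$, this pullback coincides, via the canonical identifications of Definition \ref{pullback}, with the map $\otimes^h A$ acting on $(E_0^h)_e$.

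Given $\bar\alpha\in(E_0^h)_e$, pick $\alpha\in\Gamma(E_0^h)$ with $\alpha(e)=\bar\alpha$ and $u\in\mathcal E(\he n)$ with $u(e)=0$ (and additionally $W_i u(e)=0$ for all $i=1,\dots,2n$ in the case $h=n$). Since $f_A(e)=e$, the function $u\circ f_A$ satisfies the same vanishing conditions, so the only surviving contributions in the evaluation at $e$ of both sides of
$$
d_c\bigl(f_A^*(u\alpha)\bigr)(e)=f_A^*\bigl(d_c(u\alpha)\bigr)(e)
$$
come from differentiating $u$ to maximal horizontal order. Reading these off through Definition \ref{symbol} yields, in the case $h\neq n$, the intertwining relation
$$
\sigma(d_c)(e)\circ(\otimes^h A)=\bigl(A\otimes(\otimes^{h+1} A)\bigr)\circ\sigma(d_c)(e),
$$
and, in the case $h=n$, an analogous identity with values in $\otimes^2\mathfrak h_1\otimes E_0^{n+1}$. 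Since the canonical projection $p$ appearing in Definition \ref{symbol} is manifestly $Sp_{2n}(\mathbb R)$-equivariant, the same intertwining passes to $\Sigma(d_c)(e)$ with values in $\mathrm{Sym}(\otimes^2\mathfrak h_1)\otimes E_0^{n+1}$.

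Once the intertwining is in place, the conclusion is immediate: if $\sigma(d_c)(e)\bar\alpha=0$ then
$$
\sigma(d_c)(e)\bigl((\otimes^h A)\bar\alpha\bigr)=\bigl(A\otimes(\otimes^{h+1} A)\bigr)\bigl(\sigma(d_c)(e)\bar\alpha\bigr)=0,
$$
and similarly for $\Sigma(d_c)(e)$. The main technical obstacle is the descent from operator equivariance to symbol equivariance: one must carefully select the test function $u$ at $e$ so that all lower-weight contributions vanish. This is standard when $h\neq n$, but delicate in the second-order case $h=n$, where the order-two pieces only determine the \emph{symmetric} part of the symbol—this is exactly the reason why the proposition is stated in terms of $\Sigma(d_c)$ rather than $\sigma(d_c)$ in that case.
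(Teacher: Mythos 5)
Your proposal is correct and follows essentially the same route as the paper's own proof: there, too, the kernel invariance is deduced from the $f_A$-equivariance of $d_c$ (Theorem \ref{natural}, iv), by testing on $u$ with $u(e)=0$ (and additionally $W_iu(e)=0$ when $h=n$), using $f_A^*\alpha(e)=(\otimes^hA)\bar\alpha$, and reading off the resulting intertwining of the symbol, with the symmetric part $\Sigma(d_c)$ handled exactly as you indicate in the second-order case. One harmless slip: with your setup the twist on the $\mathfrak h_1$ factor comes out as $A^{-1}\otimes(\otimes^{h+1}A)$ (as in the paper's computation, where $A^{-1}$ appears) rather than $A\otimes(\otimes^{h+1}A)$, but since either map is invertible the invariance of the kernel is unaffected.
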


\begin{proof} Suppose first  $h < n$ and
let $\alpha\in \Gamma(E_0^h)$ be a differential form such that
$\alpha(e)=\bar{\alpha}$.
 Let $f_A$ be the  matrix associated with $A$ as in \eqref{associated matrix}.
We notice first that
\begin{equation}\label{9 jan}
f_A^*(\alpha)(e) = (\otimes^h A)\bar{\alpha},
\end{equation}
since $f_A(e)=e$.
Let now $u$
be a smooth function such that $u(e)=0$. We set $v:=u\circ f^{-1}_A$. Keeping again in mind that 
\eqref{9 jan}, 
we have also that $v(e)=0$.
By Theorem \ref{natural} and Remark \ref{ovvia}, we have:
\begin{equation}\label{nov 27: 1}\begin{split}
d_c & (u f_A^* \alpha)(e) 
= d_c(f_A^*(v \alpha))(e) = f_A^*(d_c(v\alpha))(e) 
\\&
=  (\otimes^{h+1}f_A)(d_c(v\alpha)(e))
 =  (\otimes^{h+1}A)(d_c(v\alpha)(e))
 \\&
 = \sum_{I,k}  (A^{-1} P_{I,k}u)(e)\bar{\alpha}_k (\otimes^{h+1}A)(\xi_I^{h+1}(e)).
\end{split}\end{equation}
Hence, by \eqref{9 jan}
\begin{equation}\label{nov 27: 2}\begin{split}
 \sigma & (d_c)(e) ((\otimes^h A)\bar{\alpha}) =  \sigma  (d_c)(e) ((f_A^*\alpha)(e))
\\& = 
\sum_I A^{-1} \left(\sum_{k}  \bar{\alpha}_k P_{I,k}(e)\right) \otimes (\otimes^{h+1}A)(\xi_I^{h+1}(e)).
\end{split}\end{equation}
On the other hand, by assumption,
$$
\sum_{I,k}  \bar{\alpha}_k P_{I,k}(e) \otimes \xi_I^{h+1}(e)=0,
$$
so that
$$
\sum_{k}  \bar{\alpha}_kP_{I,k}(e)=0 \quad I=1,\dots,\dim E_0^{h+1},
$$
since the $\xi_I^{h+1}$'s are linearly independent.
Thus eventually from \eqref{nov 27: 2}
$$
\sigma(d_c)(e) (( \otimes^h A)\ \bar{\alpha})=0.
$$

Consider now the case $h=n$, when $d_c$ is a second order operator in the
horizontal derivatives. We stress that $E_0^{n+1}$ contains only vertical forms, i.e.
forms that are multiple of the contact form $\theta$. Suppose  $ \Sigma(d_c)(e) \bar{\alpha}=0$.
Then, by \eqref{simmetrico},
\begin{equation}\label{per componenti k=n}
\sum_{i,j,k}\bar{\alpha}_k 
\tilde{F}_{I,k,i,j} \left(W_i\otimes W_j
 + W_j\otimes W_i \right) = 0, \quad I=1, \dots, \, \dim E_0^{h+1}.
\end{equation}

We take now
$u\in \mc E(\he{n})$ satisfies $u(e)=0$ and $W_i u(e)=0$, $i=1,\dots,2n$. As above, we set $v:=u\circ f_A^{-1}$. Keeping in mind that $f_A(e)=e$, 
we have also that $v(e)=0$ and $(W_i v)(e)=0$, $i=1,\dots,2n$.   Then equations \eqref{nov 27: 1} become
\begin{equation*}\begin{split}
d_c & (u f_A^* \alpha)(e) 
= d_c(f_A^*(v \alpha))(e) = f_A^*(d_c(v\alpha))(e) 
\\&
=  (\otimes^{h+1}f_A)(d_c(v\alpha)(e))
 \\&
 = \sum_{I,k}\sum_{i,j}F_{I,k,i,j} (A^{-1}W_i )(A^{-1}W_j)  u(e) \bar{ \alpha}_k(\otimes^{n+1}f_A)(\xi_I^{n+1}(e)).
\end{split}\end{equation*}
Therefore, keeping in mind Remark \ref{ovvia} and \eqref{9 jan}, we have:
\begin{equation*}\begin{split}
 \Sigma & (d_c)(e) ((\otimes^h A)\bar{\alpha}) =  \Sigma  (d_c)(e) ((f_A^*\alpha)(e))
\\& = 
\sum_{I}\sum_{i,j,k} \bar{ \alpha}_k \tilde{F}_{I,k,i,j} \left(
(A^{-1}W_i) (e)\otimes (A^{-1}W_j )(e)\right.
\\&\hphantom{xxxxxxxxx}
\left. + (A^{-1}W_j) (e)\otimes (A^{-1}W_i )(e)
\right) \otimes (\otimes^{n+1}f_A)(\xi_I^{n+1}(e))
\\& = 
\sum_{I}  A^{-1}\Big(\sum_{i,j,k} \bar{ \alpha}_k \tilde{F}_{I,k,i,j}  \big(W_i (e)\otimes W_j(e)
\\&  \hphantom{xxxxxxxxxxx}+ W_j(e)\otimes W_i(e)\big) \Big)\otimes
 (\otimes^{n+1}f_A)(\xi_I^{n+1}(e))
 = 0,
\end{split}\end{equation*}
by \eqref{per componenti k=n}.  

Finally, the proof for $h>n$ can be carried out precisely as in the case $h<n$, with only minor changes. In particular,
\eqref{9 jan} must be replaced taking into account that a form $\alpha\in E_0^h$ has also a vertical
component of the form $\beta\wedge\theta$ and that
\begin{equation*}
f_A^*(\beta\wedge\theta)(e) = (\otimes^h A)\beta (e) \wedge \theta.
\end{equation*}

This completes the proof of the proposition.

\end{proof}

\begin{proof}[Proof of Theorem \ref{pierre}] First of all, we notice that, by Corollary \ref{irreducible} and 
Proposition \ref{invariance},
both $\ker \sigma(d_c)(e) $ (if $h \neq n$) and $\ker \Sigma(d_c)(e)$ (if $h=n$) are the null space $\{0\}$, 
and hence both $ \sigma(d_c)(e)$ and $\Sigma(d_c)(e)$ have a left inverse
$$B_h\in \mathrm{Hom}\, ( \mathfrak h_1\otimes
(E_0^{h+1})_e, (E_0^h)_e) \quad\mbox{if $h \neq n$.}
$$
and 
$$B_n\in \mathrm{Hom}\, (\mathrm{Sym}\, (\otimes^2\mathfrak h_1)\otimes
(E_0^{n+1})_e, (E_0^n)_e) \quad\mbox{if $h=n$.}
$$
By the commutativity of the diagram \eqref{diagram 1}, $B_h$ and $B_n$ can be identified with constant coefficient
maps
$$
B_h\in \mathrm{Hom}\, ( \mathfrak h_1\otimes
E_0^{h+1}, E_0^h), \quad B_n\in \mathrm{Hom}\, (\mathrm{Sym}\, (\otimes^2\mathfrak h_1)\otimes
E_0^{n+1}, E_0^n)
$$
such that
\begin{equation}\label{inverso}
\alpha = B_h(\sigma(d_c)\alpha) \quad\mbox{for all $\alpha\in \Gamma(E_0^h)$, $h\neq n$,}
\end{equation}
and
\begin{equation}\label{inverso bis}
\alpha = B_n(\Sigma(d_c)\alpha) \quad\mbox{for all $\alpha\in \Gamma(E_0^n)$.}
\end{equation}

We deal first with the case $h \neq n$ and we set
$$
B_h(W_i\otimes \xi_I^{h+1}):= \sum_J b_{i, I}^J \xi_J^h.
$$
Then, if we write $\alpha=\sum_J \alpha_J\xi_J^h\in \Gamma(E_0^h)$
and 
$$
P_{I,k}=\sum_i F_{I,k,i} W_i,
$$
(where the $F_{I,k,i} $'s are real constants) identity \eqref{inverso} becomes
\begin{equation}\label{inverso coord}\begin{split}
\alpha &= B_h(\sum_{I,k}\alpha_k P_{I,k}\otimes \xi_I^{h+1})
\\&=
\sum_J \left(\sum_{I,k,i} b_{i,I}^J F_{I,k,i} \alpha_k\right)\xi_J^h,
\end{split}\end{equation}
so that
\begin{equation}\label{inverso comp}\begin{split}
\alpha_J = \sum_{I,k,i} b_{i,I}^J F_{I,k,i} \alpha_k,
\quad \mbox{for $J=1,\dots, \dim E_0^h$.}
\end{split}\end{equation}

Suppose now $d_c\alpha =0$. Then, writing the identity in coordinates, if
$I=1,\dots, \dim E_0^{h+1}$, we have
$$\sum_i W_i  \left( \sum_{k} F_{I,k,i} \alpha_k\right)=0,
$$
so that, if we denote by 
$G_I$ 
the horizontal vector field 
\begin{equation*}\begin{split}
G_I=\sum_i G_{I,i}W_i :=\sum_i  \left( \sum_{k} F_{I,k,i} \alpha_k\right) W_i
\end{split}\end{equation*}
then
$$
\sum_i W_i G_{I,i}  = 0, \qquad I=1,\dots, \dim E_0^{h+1}.
$$
Thus \eqref{inverso comp} reads as
$$
\alpha_J = \sum_{I,i} b_{i,I}^J G_{I,i},
$$
achieving the proof in the case $h<n$.

We deal now with the case $h=n$ and we set
$$
B_n((W_i\otimes W_j+W_j\otimes W_i)\otimes \xi_I^{n+1}):= \sum_J b_{i,j, I}^J \xi_J^n.
$$
Thus, if we write $\alpha=\sum_J \alpha_J\xi_J^h\in \Gamma(E_0^n)$
by \eqref{simmetrico}, identity \eqref{inverso bis} becomes
\begin{equation}\label{inverso coord bis}\begin{split}
\alpha &= B_n(\sum_{I,k,i,j}\alpha_k \tilde{F}_{I,k,i,j} \left(W_i\otimes W_j
 + W_j\otimes W_i \right) \otimes \xi_I^{n+1})
\\&=
\sum_J \left(\sum_{I,k,i,j} b_{i,j,I}^J \tilde{F}_{I,k,i,j} \alpha_k\right)\xi_J^n,
\end{split}\end{equation}
so that
\begin{equation}\label{inverso comp bis}\begin{split}
\alpha_J = \sum_{I,k,i, j} b_{i,j,I}^J \tilde{F}_{I,k,i,j} \alpha_k,
\quad \mbox{for $J=1,\dots, \dim E_0^n$.}
\end{split}\end{equation}

Denote by 
$G_I$ 
the horizontal tensor field 
\begin{equation}\label{slucia 1}\begin{split}
G_I &=\sum_{i,j} G_{I,i,j}W_i\otimes W_j :=\sum_{i,j}  \left( \sum_{k} F_{I,k,i,j} \alpha_k\right) 
W_i\otimes W_j.
\end{split}\end{equation}
By Proposition \ref{sym deco} we can write
$$
G_I = G_I^{\mathrm{Sym}}  + G_I^{\mathrm{Skew}}  ,
$$
where
\begin{equation*}\label{slucia 2}\begin{split}
G_I^{\mathrm{Sym}} &=
\sum_{i,j} (G_I^{\mathrm{Sym}})_{i,j} (W_i\otimes W_j
 + W_j\otimes W_i) 
 \\& :=
\sum_{i,j}  \left( \sum_{k} \tilde{F}_{I,k,i,j} \alpha_k\right) 
(W_i\otimes W_j
 + W_j\otimes W_i) .
\end{split}\end{equation*}

We suppose now that $d_c\alpha =0$, that, by \eqref{slucia 1},
in coordinates is
$$\sum_{i,j} W_i W_j \,G_{I,i,j}=0,
\quad I=1,\dots, \dim E_0^{n+1}.
$$

Thus \eqref{inverso comp bis} reads as
$$
\alpha_J = \sum_{I,i,j} b_{i,j,I}^J (G_I^{\mathrm{Sym}})_{i,j}
\quad \mbox{for $J=1,\dots, \dim E_0^n$,}
$$
achieving the proof in the case $h=n$.

\end{proof}

\section{Proof of Theorem \ref{Hn}}
\label{analysis}

The proof follows the lines of \cite{BF7}. Let us reming few facts of harmonic
analysis in homogeneous groups.

A differential operator $P:\Gamma(E_0^h)\to \Gamma (E_0^k)$ is said left-invariant
if for all $q\in\he n$
$$
P( \tau_q)_* \alpha = (\tau_q)_*(P\alpha)\quad \mbox{for all $\alpha\in \Gamma(E_0^h)$.}
$$
    If $f$ is a real function defined in $\he n$, we denote
    by $\ccheck f$ the function defined by $\ccheck f(p):=
    f(p^{-1})$, and, if $T\in\mc D'(\he n)$, then $\ccheck T$
    is the distribution defined by $\Scal{\ccheck T}{\phi}
    :=\Scal{T}{\ccheck\phi}$ for any test function $\phi$.
    
    Following e.g. \cite{folland_stein}, we can define a group
convolution in $\he n$: if, for instance, $f\in\mc D(\he n)$ and
$g\in L^1_{\mathrm{loc}}(\he n)$, we set
\begin{equation}\label{group convolution}
f\ast g(p):=\int f(q)g(q^{-1}\cdot p)\,dq\quad\mbox{for $q\in \he n$}.
\end{equation}
We remind that, if (say) $g$ is a smooth function and $P$
is a left invariant differential operator, then
$$
P(f\ast g)= f\ast Pg.
$$
We remind also that the convolution is again well defined
when $f,g\in\mc D'(\he n)$, provided at least one of them
has compact support. In this case the following identities
hold
\begin{equation}\label{convolutions var}
\Scal{f\ast g}{\phi} = \Scal{g}{\ccheck f\ast\phi}
\quad
\mbox{and}
\quad
\Scal{f\ast g}{\phi} = \Scal{f}{\phi\ast \ccheck g}
\end{equation}
 for any test function $\phi$.
 
As in \cite{folland_stein},
we also adopt the following multi-index notation for higher-order derivatives. If $I =
(i_1,\dots,i_{2n+1})$ is a multi--index, we set  
$W^I=W_1^{i_1}\cdots
W_{2n}^{i_{2n}}\;T^{i_{2n+1}}$. 
By the Poincar\'e--Birkhoff--Witt theorem, the differential operators $W^I$ form a basis for the algebra of left invariant
differential operators in $\he n$. 
Furthermore, we set 
$|I|:=i_1+\cdots +i_{2n}+i_{2n+1}$ the order of the differential operator
$W^I$, and   $d(I):=i_1+\cdots +i_{2n}+2i_{2n+1}$ its degree of homogeneity
with respect to group dilations.

 Suppose now $f\in\mc E'(\he n)$ and $g\in\mc D'(\he n)$. Then,
 if $\psi\in\mathcal D(\he n)$, we have
 \begin{equation}\label{convolution by parts}
 \begin{split}
\Scal{(W^If)\ast g}{\psi}&=
 \Scal{W^If}{\psi\ast \ccheck g} =
  (-1)^{|I|}  \Scal{f}{\psi\ast (W^I \,\ccheck g)} \\
&=
 (-1)^{|I|} \Scal{f\ast \ccheck W^I\,\ccheck g}{\psi}.
\end{split}
\end{equation}

\medskip

Following \cite{folland}, we remind now the notion 
of {\it kernel of order $a$}, as well as some basic properties.

\begin{definition} A kernel of order $a$ is a 
homogeneous distribution of degree $a-Q$
(with respect to group dilations),
that is smooth outside of the origin.
\end{definition}

\begin{proposition}\label{kernel}
Let $K\in\mc D'(\he n)$ be a kernel of order $a$.
\begin{itemize}
\item[i)] $\ccheck K$ is again a kernel of order $a$;
\item[ii)] $W_\ell K$ is a a kernel of order $a-1$ for
any horizontal derivative $W_\ell K$, $\ell=1,\dots,2n$;
\item[iii)]  If $a>0$, then $K\in L^1_{\mathrm{loc}}(\he n)$.
\end{itemize}
\end{proposition}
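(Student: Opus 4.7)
The plan is to verify each of the three assertions directly from the definition of a kernel of order $a$, using only the homogeneity of $K$ under the dilations $\delta_\lambda$, the fact that $K$ is smooth on $\he n \setminus \{0\}$, and the explicit Heisenberg group structure.

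For (i), the key observation is that in exponential coordinates the inversion $p \mapsto p^{-1}$ is a smooth diffeomorphism of $\he n$ fixing the origin which commutes with dilations: for $p = (x,y,t)$ one has $p^{-1} = (-x,-y,-t)$, hence $\delta_\lambda(p^{-1}) = (\delta_\lambda p)^{-1}$. Consequently $\ccheck K$ is smooth on $\he n \setminus \{0\}$, and for any test function $\varphi$ the identity $\ccheck{(\varphi\circ\delta_\lambda)} = (\ccheck\varphi)\circ\delta_\lambda$ combined with the homogeneity of $K$ of degree $a-Q$ gives $\Scal{\ccheck K}{\varphi\circ\delta_\lambda} = \lambda^{-a}\Scal{\ccheck K}{\varphi}$, so $\ccheck K$ is again homogeneous of degree $a-Q$.

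For (ii), recall that $W_\ell \in \mathfrak h_1$ is homogeneous of degree one with respect to the dilations, i.e.\ $W_\ell(\varphi\circ\delta_\lambda) = \lambda\,(W_\ell\varphi)\circ\delta_\lambda$. Integrating by parts in the distributional sense then yields $\Scal{W_\ell K}{\varphi\circ\delta_\lambda} = \lambda^{-(a-1)}\Scal{W_\ell K}{\varphi}$, so that $W_\ell K$ is homogeneous of degree $(a-1)-Q$; smoothness on $\he n\setminus\{0\}$ is preserved because $W_\ell$ is a smooth left-invariant vector field.

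For (iii), let $\|\cdot\|$ be a homogeneous norm on $\he n$, smooth outside the origin and homogeneous of degree one, and denote by $B := \{\|p\|<1\}$ the associated unit ball. Since $K$ is smooth on the compact annulus $\{1/2\le\|p\|\le 1\}$, it is bounded there; writing any $p\ne 0$ as $p = \delta_{\|p\|}q$ with $\|q\|=1$ and using the homogeneity of degree $a-Q$ yields the pointwise estimate $|K(p)|\le C\|p\|^{a-Q}$ on $\he n\setminus\{0\}$. Decomposing $B\setminus\{0\}$ into the dyadic annuli $A_k:=\{2^{-k-1}\le\|p\|<2^{-k}\}$, each of volume comparable to $2^{-kQ}$ (since $\delta_r$ has Jacobian $r^Q$), one obtains $\int_{A_k}|K|\,dp\le C'\,2^{-ka}$, and summation over $k\ge 0$ converges precisely when $a>0$. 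The main (and really only) subtlety lies in the first step, where the commutation of inversion with dilations must be extracted from the explicit Heisenberg group law; the remainder is routine homogeneity bookkeeping.
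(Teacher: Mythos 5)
Your proof is correct; the paper in fact gives no argument for Proposition \ref{kernel}, stating it as a reminder borrowed from \cite{folland}, and your direct verification via dilation homogeneity (inversion commutes with $\delta_\lambda$ since $p^{-1}=-p$ in exponential coordinates; $W_\ell$ is homogeneous of degree one; pointwise bound $|K(p)|\le C\Vert p\Vert^{a-Q}$ plus dyadic annuli) is precisely the standard proof in that reference. Two footnote-sized points you leave implicit: in (ii) the distributional integration by parts uses that each $W_\ell$ is divergence-free with respect to Lebesgue (= Haar) measure, so its formal adjoint is $-W_\ell$; and in (iii), after showing that the smooth function representing $K$ on $\he n\setminus\{0\}$ is integrable near the origin, one should note that this $L^1_{\mathrm{loc}}$ function is itself a homogeneous distribution of degree $a-Q>-Q$, so its difference with $K$ is a homogeneous distribution supported at the origin (a finite combination of derivatives of the Dirac delta, whose degrees are $\le -Q<a-Q$) and hence vanishes, which is what identifies the distribution $K$ with the function as assertion (iii) requires.
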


\begin{definition}\label{rumin laplacian} 
In $\he n$, following \cite{rumin_jdg}, we define
the operator $\Delta_{\he{},h}$  on $E_0^h$ by setting
\begin{equation*}
\Delta_{\he{},h}=
\left\{
  \begin{array}{lcl}
     d_c\delta_c+\delta_c d_c\quad &\mbox{if } & h\neq n, n+1;
     \\ (d_c\delta_c)^2 +\delta_cd_c\quad& \mbox{if } & h=n;
     \\d_c\delta_c+(\delta_c d_c)^2 \quad &\mbox{if }  & h=n+1.
  \end{array}
\right.
\end{equation*}

\end{definition}

Notice that $-\Delta_{\he{},0} = \sum_{j=1}^{2n}(W_j^2)$ is the usual sub-Laplacian of
$\he n$.

 For sake of simplicity, once a basis  of $E_0^h$
is fixed, the operator $\Delta_{\he{},h}$ can be identified with a matrix-valued map, still denoted
by $\Delta_{\he{},h}$
\begin{equation}\label{matrix form}
\Delta_{\he{},h} = (\Delta_{\he{},h}^{ij})_{i,j=1,\dots,N_h}: \mc D'(\he{n}, \rn{N_h})\to \mc D'(\he{n}, \rn{N_h}).
\end{equation}

This identification makes possible to avoid the notion of currents: we refer to \cite{BFTT} for
this more elegant presentation.

Combining \cite{rumin_jdg}, Section 3,   and \cite{BFT3}, Theorems 3.1 and 4.1, we obtain the following result.

\begin{theorem}\label{global solution}
If $0\le h\le 2n+1$, then the differential operator $\Delta_{\he{},h}$ is
hypoelliptic of order $a$, where $a=2$ if $h\neq n, n+1$ and  $a=4$ 
if $h=n, n+1$ with respect to group dilations. Then
\begin{enumerate}
\item[i)] for $j=1,\dots,N_h$ there exists
\begin{equation}\label{numero}
    K_j =
\big(K_{1j},\dots, K_{N_h j}\big), \quad j=1,\dots N_h
\end{equation}
 with $K_{ij}\in\mc D'(\he{n})\cap \mc
E(\he{n} \setminus\{0\})$,
$i,j =1,\dots,N$;
\item[ii)] if $a<Q$, then the $K_{ij}$'s are
kernels of type $a$
 for
$i,j
=1,\dots, N_h$

 If $a=Q$,
then the $K_{ij}$'s satisfy the logarithmic estimate
$|K_{ij}(p)|\le C(1+|\ln\rho(p)|)$ and hence
belong to $L^1_{\mathrm{loc}}(\he{n})$.
Moreover, their horizontal derivatives  $W_\ell K_{ij}$,
$\ell=1,\dots,2n$, are
kernels of type $Q-1$;
\item[iii)] when $\alpha\in
\mc D(\he{n},\rn {N_h})$,
if we set
\begin{equation}\label{numero2}
    \mc K\alpha:=
\big(    
    \sum_{j}\alpha_j\ast  K_{1j},\dots,
     \sum_{j}\alpha_j\ast  K_{N_hj}\big),
\end{equation}
 then $ \Delta_{\he{},h}\mc K\alpha =  \alpha. $
Moreover, if $a<Q$, also $\mc K\Delta_{\he{},h} \alpha =\alpha$.

\item[iv)] if $a=Q$, then for any $\alpha\in
\mc D(\he{n},\rn {N_h})$ there exists 
$\beta_\alpha:=(\beta_1,\dots,\beta_{N_h})\in \rn{N_h}$,  such that
$$\mc K \Delta_{\he{},h}\alpha - \alpha = \beta_\alpha.$$

%
\end{enumerate}
\end{theorem}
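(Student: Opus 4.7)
The plan is to combine Rumin's hypoellipticity theorem for $\Delta_{\he{},h}$ with the general Folland-type construction of homogeneous fundamental solutions for left-invariant hypoelliptic operators on stratified groups. Concretely, I would proceed in four steps.

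First, I would establish that $\Delta_{\he{},h}$ is hypoelliptic and homogeneous of the claimed degree. By construction in Definition \ref{rumin laplacian}, the order $a$ is balanced so that $\Delta_{\he{},h}$ is homogeneous of degree $2$ under horizontal dilations for $h\neq n,n+1$ and of degree $4$ for $h=n,n+1$ (compensating for the fact that $d_c$ jumps from order $1$ to order $2$ across the middle). Rumin proved in \cite{rumin_jdg}, Section 3, that this operator is maximally hypoelliptic, with invertible Rockland-type symbol on nontrivial irreducible representations. Since we want to use the matrix formulation in \eqref{matrix form}, I would record that the scalar Rockland criterion applied entrywise yields hypoellipticity of the whole system.

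Second, I would invoke the Folland theorem on fundamental solutions for left-invariant hypoelliptic homogeneous operators on stratified groups, as extended to systems in \cite{BFT3}, Theorems 3.1 and 4.1. For each basis vector $e_j\in\R^{N_h}$, this provides a vector-valued tempered distribution $K_j=(K_{1j},\dots,K_{N_hj})$ such that $\Delta_{\he{},h}K_j=\delta_0 e_j$, with each $K_{ij}\in\mc D'(\he n)\cap\mc E(\he n\setminus\{0\})$ and homogeneous of degree $a-Q$. This gives (i).

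Third, the homogeneity analysis splits into two subcases. If $a<Q$ then $K_{ij}$ is a kernel of type $a$ in the sense of Folland, proving (ii) directly. The delicate situation is $a=Q$, where homogeneity $0$ forces logarithmic rather than power behavior: here I would argue by the standard normalization procedure, extracting a logarithm $C(1+|\ln\rho|)$ bound so that $K_{ij}\in L^1_{\mathrm{loc}}$, and then using Proposition \ref{kernel} (ii) to see that $W_\ell K_{ij}$ is a kernel of type $Q-1$. This boundary case is what I expect to be the main technical obstacle, because the usual left-inverse argument fails and one can only recover $\mc K\Delta_{\he{},h}\alpha-\alpha=\beta_\alpha$ up to a locally constant element of $\R^{N_h}$; this is why (iv) is stated separately from (iii).

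Finally, for (iii) I would verify $\Delta_{\he{},h}\mc K\alpha=\alpha$ by direct computation using \eqref{numero2}, \eqref{convolutions var}, left-invariance of $\Delta_{\he{},h}$, and the identity $\Delta_{\he{},h}K_j=\delta_0 e_j$. When $a<Q$, the decay of $K_{ij}$ at infinity permits symmetry of convolution to give $\mc K\Delta_{\he{},h}\alpha=\alpha$ as well. For (iv) in the case $a=Q$, a Liouville-type argument on the distribution $\mc K\Delta_{\he{},h}\alpha-\alpha$ (whose $\Delta_{\he{},h}$ vanishes and which is compactly supported modulo constants by hypoellipticity) delivers the claimed constant $\beta_\alpha\in\R^{N_h}$.
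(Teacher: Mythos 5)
Your proposal is correct and follows essentially the same route as the paper, which gives no independent proof but simply combines Rumin's hypoellipticity result (\cite{rumin_jdg}, Section 3) with the Folland-type construction of homogeneous matrix-valued fundamental solutions for left-invariant hypoelliptic systems (\cite{BFT3}, Theorems 3.1 and 4.1), including the dichotomy $a<Q$ versus $a=Q$ with the logarithmic kernel and the constant correction $\beta_\alpha$ in the critical case. The only loose point is your phrase about applying the scalar Rockland criterion ``entrywise'': the operator is not diagonal, so one needs the Rockland condition for systems (injectivity of the representation-theoretic symbol on smooth vectors), which is exactly what Rumin's maximal hypoellipticity provides, so the attribution stands.
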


\begin{remark}\label{K}
Coherently with formula \eqref{matrix form}, the operator $\mc K$ can be identified
with an operator (still denoted by $\mc K$) acting on smooth compactly
supported differential forms in $\mc D(\he n, E_0^h)$.
\end{remark}

\begin{proof}[Proof of Theorem \ref{Hn}]

The case $h=0$ is well known (\cite{FGaW}, \cite{CDG}, \cite{MSC}).
  
  \medskip
  
\noindent \textbf{Case}  $\mathbf{1< h< 2n}$ \textbf{and} $\mathbf{h \neq n, n+1}.$
 If $u, \phi \in \mc D(\he n, E_0^h)$, we can write
\begin{equation}\label{representation 2}\begin{split}
\scal{u}{\phi}_{L^2(\he n,E_0^h)} & = \scal{ u}{\Delta_{\he{},h}\mc K \phi}_{L^2(\he n,E_0^h)}
\\&
= \scal{ u}{(\delta_cd_c + d_c\delta_c)\mc K\phi}_{L^2(\he n,E_0^h)}.
\end{split}
\end{equation}
Consider now the first term in the previous sum, 
$$
 \scal{ u}{\delta_cd_c \mc K\phi}_{L^2(\he n,E_0^h)}=  \scal{d_c u}{d_c \mc K\phi}_{L^2(\he n,E_0^{h+1})}.
$$
If we write $f:= d_c  u$, then $d_cf=0$.
From now on, without loss of generality, for $1\le h\le 2n+1$ we take an orthonormal basis of $E_0^h$, still denoted by  
$\{\xi_\ell^{h}\; ; \; \ell =1,\dots, \dim E_0^h\}$.
Thus, since
$f, d_c \mc K\phi\in E_0^{h+1}$, we can write $f=\sum_{\ell=1}^{\dim E_0^{h+1}} f_\ell\xi_\ell^{h+1}$,
$d_c \mc K\phi=\sum_{\ell=1}^{\dim E_0^{h+1}} (d_c \mc K\phi)_\ell\xi_\ell^{h+1}$,
and hence
we can reduce ourselves to estimate
\begin{equation}\label{toprove}
\scal{f_\ell}{(d_c \mc K\phi)_\ell}_{L^2(\he n)}\quad\mbox{for $\ell=1,\dots,\dim E_0^{h+1}$.}
\end{equation}

By Theorem \ref{pierre}, if $h \neq n-1$ ,  each component $f_\ell$ of $f$ can be written as 
$$
f_\ell = \sum_{I=1}^{\dim E_0^{{h+2}}} \sum_{i=1}^{2n} b_{i,I}^\ell G_{I,i},
$$
where the $b_{i,I}^\ell $'s are real constants and for any $I=1,\dots,\dim E_0^{{h+2}}$ the $G_{I,i}$'s are the components of
an horizontal vector field
$$
G_I=\sum_i G_{I,i}W_i 
$$
with 
\begin{equation}\label{Div 1}
\sum_i W_i G_{I,i}  = 0, \qquad I=1,\dots, \dim E_0^{{h+2}}.
\end{equation}
On the other hand, for $h=n-1$, 
each component $f_\ell$ of $f$, $\ell=1,\dots, \dim E_0^{n}$, can be written as 
$$
f_\ell = \sum_{I=1}^{\dim E_0^{{n+1}}} \sum_{i,j} b_{i,j,I}^\ell (G_I^{\mathrm{Sym}})_{i,j}.
$$
Here the $b_{i,j,I}^\ell $'s are real constants and for any $I=1,\dots,\dim E_0^{n+1}$ the $(G_I^{\mathrm{Sym}})_{i,j}$'s are the components
of the symmetric part (see Proposition \ref{sym deco}) of the 2-tensor
$$
G_I=\sum_i G_{I,i,j}\, W_i \otimes W_j
$$
that satisfies
\begin{equation}\label{Div 2}
\sum_{i,j} W_i W_j \,G_{I,i,j}=0,
\quad I=1,\dots, \dim E_0^{n+1}.
\end{equation}
Suppose now $h \neq n-1$. In order to estimate the terms of \eqref{toprove}, we have to estimate
terms of the form
\begin{equation}\label{toprove1}
\scal{G_{I,i}}{(d_c \mc K\phi)_\ell}_{L^2(\he n)} =
\scal{G_{I}}{\Phi}_{L^2(\he n,\mathfrak h_1)},
\end{equation}
where
$$
\Phi = 
(d_c \mc K\phi)_\ell  W_i.
$$
We can apply Theorem \ref{chanillo_van}. Keeping in mind \eqref{reverse 1}, we obtain
\begin{equation}\label{4.2:1} 
\left|\scal{f_\ell}{(d_c \mc K\phi)_\ell}_{L^2(\he n)}\right| \le C\|f\|_{L^1(\he n, E_0^{h+1})}
\| \nabla_{\mathbb H} d_c \mc K\phi\|_{L^Q(\he n, E_0^{h+1})}.
\end{equation}
On the other hand, $\nabla_{\he{}}d_c \mc K\phi$ can be expressed as a sum of terms 
with components of the form
$$
\phi_j\ast W^I\tilde K_{ij}\, \quad\mbox{with $d(I)=2$.}
$$
By Theorem \ref{global solution}, iv) and Proposition \ref{kernel}, ii) $W^I\tilde K_{ij}$ are  kernels
of type 0, so that, by \cite{folland}, Proposition 1.9 we have
\begin{equation}\label{10.2:1}
|\scal{f_\ell}{(d_c \mc K\phi)_\ell}_{L^2(\he n)}| \le  C\|f \|_{L^1(\he{n},E_0^{h+1})}\|\phi\|_{L^{Q}(\he{n},E_0^h)}.
\end{equation}
The same argument can be carried out for all the components of $f$, yielding
\begin{equation}\label{27.2:4}
|\scal{f}{d_c \mc K\phi}_{L^2(\he n, E_0^{h+1})}| \le  C\|f \|_{L^1(\he{n},E_0^{h+1})}\|\phi\|_{L^{Q}(\he{n},E_0^h)}.
\end{equation}

Suppose now $h=n-1$ then we have to estimate terms of the form 
\begin{equation}\label{toprove2}
\scal{(G_I^{\mathrm{Sym}})_{i,j}}{(d_c \mc K\phi)_\ell}_{L^2(\he n)} =
\scal{G_{I}}{\Phi}_{L^2(\he n,\otimes^2\mathfrak h_1)},
\end{equation}
$$
\Phi = 
(d_c \mc K\phi)_\ell  \big(W_i \otimes W_j + W_j \otimes W_i)
\in \Gamma (\he n, \mathrm{Sym}\, (\otimes^2\mathfrak h_1)).
$$
We can apply Theorem \ref{chanillo_van5} and we obtain 
\begin{equation}\label{27.2:5}
|\scal{f}{d_c \mc K\phi}_{L^2(\he n, E_0^{n})}| \le  C\|f \|_{L^1(\he{n},E_0^{n})}\|\phi\|_{L^{Q}(\he{n},E_0^{n-1})}.
\end{equation}

This achieve the estimate of the first term of \eqref{representation 2} for all 
$1<h<2n$, $h\neq n, n+1$.
\begin{equation}\label{11.2:1}
|\scal{u}{\delta_c d_c \mc K\phi}_{L^2(\he n, E_0^h)}| \le  C\|f \|_{L^1(\he{n},E_0^{h+1})}\|\phi\|_{L^{Q}(\he{n},E_0^h)}.
\end{equation}

Consider now the second term in \eqref{representation 2}
\begin{equation*}\begin{split}
&\scal{u}{ d_c\delta_c \mc K  \phi}_{L^2(\he n,E_0^h)} = \scal{ \delta_c u}{\delta_c \mc K\phi}_{L^2(\he n,E_0^{h-1})}
\\&\hphantom{xxxxx} 
= \scal{g}{\delta_c \mc K\phi}_{L^2(\he n,E_0^{h-1})} = \scal{\ast g}{\ast \delta_c \mc K\phi}_{L^2(\he n,
E_0^{2n+2-h})},
\end{split}\end{equation*}
where $\ast$ denotes the Hodge duality. We notice now that from $\delta_cu =g$,
by Hodge duality we have $d_c\ast u = \ast g$. Hence $d_c (\ast g)=0$, and thus, arguing
precisely as above, we get
\begin{equation}\label{11.2:2}\begin{split}
\left| \scal{u}{ d_c\delta_c \mc K  \phi}_{L^2(\he n,E_0^h)} \right|
\le  C\|g \|_{L^1(\he{n},E_0^{h+1})}\|\phi\|_{L^{Q}(\he{n},E_0^h)}.
\end{split}\end{equation}

Combining \eqref{11.2:2} with  \eqref{11.2:1}, we get eventually
$$
|\scal{u}{\phi}_{L^2(\he n, E_0^h)} |  \le C\big(\|f\|_{L^1(\he n, E_0^{h+1})} + 
\|g\|_{L^1(\he n,E_0^{h-1})} \big) \|\phi\|_{L^Q(\he n, E_0^h)},
$$
and hence
$$
\|u\|_{L^{Q/(Q-1)}(\he{n}, E_0^h)}\le C\big( \|f \|_{L^1(\he{n}, E_0^{h+1})} + 
\|g\|_{L^1(\he n,E_0^{h-1})} \big).
$$
This completes the proof of statement iii) of the theorem.

\noindent \textbf{Case}  $\mathbf{h=1, 2n}$. By Hodge duality we may
restrict ourselves to the case $h=1$. Again we write
\begin{equation}\label{representation 2bis}\begin{split}
\scal{u}{\phi}_{L^2(\he n,E_0^1)} & = \scal{ u}{\Delta_{\he{},h}\mc K \phi}_{L^2(\he n,E_0^1)}
\\&
= \scal{ u}{\delta_cd_c \mc K\phi}_{L^2(\he n,E_0^1)}
+ \scal{ u}{d_c\delta_c\mc K\phi}_{L^2(\he n,E_0^1)}
\\&
= \scal{ f}{d_c \mc K\phi}_{L^2(\he n,E_0^1)}
+ \scal{ g}{\delta_c\mc K\phi}_{L^2(\he n,E_0^1)}.
\end{split}
\end{equation}
In order to estimate the first term $\scal{ f}{d_c \mc K\phi}_{L^2(\he n,E_0^1)}$,
we repeat verbatim the arguments above for the corresponding term in the case $h\neq n, n+1$.
As for the second term,
by Theorem \ref{global solution}, formula \eqref{numero2},
and keeping in mind that $\delta_c$  is an operator of order 1 in the horizontal derivatives
when acting on $E_0^1$
the quantity $\delta_c \mc K\phi$ can be written as a sum of terms such as
$$
\phi_j\ast W_\ell \tilde K_{ij}, \quad\mbox{with  $\ell=1,\dots,2n$.}
$$
On the other hand,
$$
\scal{g}{\phi_j\ast W_\ell \tilde K_{ij} }_{L^2(\he n)}
= \scal{g \ast \ccheck(W_\ell \tilde K_{ij}) }{\phi_j }_{L^2(\he n)}
$$

Notice the $W_\ell \tilde K_{ij}$'s  and hence the $\ccheck(W_\ell \tilde K_{ij})$'s are  kernels of type 1.
Thus, by Theorem 6.10 in \cite{folland_stein},
$$
|\scal{ g}{\phi_j\ast W_\ell \tilde K_{ij} }_{L^2(\he n)} |
\le C \|g\|_{\mc H^1(\he n)} \|\phi\|_{L^Q(\he n, E_0^1)}.
$$
Combining this estimate with the one in \eqref{11.2:1}, we get eventually
$$
|\scal{u}{\phi}_{L^2(\he n, E_0^1)} |  \le C\big(\|f\|_{L^1(\he n, E_0^2)} + 
\|g\|_{\mc H^1(\he n)} \big) \|\phi\|_{L^Q(\he n, E_0^1)},
$$
and hence
$$
\|u\|_{L^{Q/(Q-1)}(\he{n}, E_0^1)}\le C\big( \|f \|_{L^1(\he{n}, E_0^2)} + \|g \|_{\mc H^1(\he {n})}\big).
$$
This completes the proof of statement ii) of the theorem.

\noindent \textbf{Case}  $\mathbf{h=n,n+1}$. By Hodge duality we may
restrict ourselves to the case $h=n$. 

 If $u,\phi\in E_0^n$ are smooth compactly supported
forms, then we can write
\begin{equation}\label{representation 3}
\begin{split}
\scal{u}{\phi}_{L^2(\he n,E_0^n)} & = \scal{ u}{\Delta_{\he{},n}\mc K \phi}_{L^2(\he n,E_0^n)}
\\&
= \scal{ u}{(\delta_cd_c + (d_c\delta_c)^2)\mc K\phi}_{L^2(\he n,E_0^n)}.
\end{split}
\end{equation}
Consider now the term
$$
 \scal{ u}{\delta_cd_c \mc K\phi}_{L^2(\he n,E_0^n)}=  \scal{d_c u}{d_c \mc K\phi}_{L^2(\he n,E_0^{n+1})}
 =  \scal{f}{d_c \mc K\phi}_{L^2(\he n,E_0^{n+1})}.
$$
Let us write $f:= d_c  u$. 
Again, $d_c f=0$, and hence, as above,
if  $f=\sum_{\ell=1}^{\dim E_0^{n+1}} f_\ell\xi_\ell^{n+1}$,
$d_c \mc K\phi=\sum_{\ell=1}^{\dim E_0^{n+1}} (d_c \mc K\phi)_\ell\xi_\ell^{n+1}$,
and thus
we can reduce ourselves to estimate
\begin{equation}\label{toprove bis}
\scal{f_\ell}{(d_c \mc K\phi)_\ell}_{L^2(\he n)}\quad\mbox{for $\ell=1,\dots,\dim E_0^{n+1}$.}
\end{equation}

By Theorem \ref{pierre}, each component $f_\ell$ of $f$ can be written as 
$$
f_\ell = \sum_{I=1}^{\dim E_0^{{n+2}}} \sum_{i=1}^{2n} b_{i,I}^\ell G_{I,i},
$$
where the $b_{i,I}^\ell $'s are real constants and for any $I=1,\dots,\dim E_0^{{n+2}}$ the $G_{I,i}$'s are the components of
an horizontal vector field
$$
G_I=\sum_i G_{I,i}W_i 
$$
with 
\begin{equation}\label{Div 1 bis}
\sum_i W_i G_{I,i}  = 0, \qquad I=1,\dots, \dim E_0^{{n+2}}.
\end{equation}

As in the previous cases, in order to estimate the terms of \eqref{toprove bis}, we have to deal
terms of the form
\begin{equation}\label{toprove1 bis}
\scal{G_{I,i}}{(d_c \mc K\phi)_\ell}_{L^2(\he n)} =
\scal{G_{I}}{\Phi}_{L^2(\he n,\mathfrak h_1)},
\end{equation}
where
$$
\Phi = 
(d_c \mc K\phi)_\ell  W_i.
$$
We can apply Theorem \ref{chanillo_van}. Again keeping in mind \eqref{reverse 1}, we obtain
\begin{equation}\label{27.2:1} 
\left|\scal{f_\ell}{(d_c \mc K\phi)_\ell}_{L^2(\he n)}\right| \le C\|f\|_{L^1(\he n, E_0^{n+1})}
\| \nabla_{\mathbb H} d_c \mc K\phi\|_{L^Q(\he n, E_0^{n+1})}.
\end{equation}
On the other hand, $\nabla_{\he{}}d_c \mc K\phi$ can be expressed as a sum of terms 
with components of the form
$$
\phi_j\ast W^I\tilde K_{ij}\, \quad\mbox{with $d(I)=3$,}
$$
since the differential $d_c$ on $n$-forms has order 2 in the horizontal derivatives.
By Theorem \ref{global solution}, iv) and Proposition \ref{kernel}, ii) $W^I\tilde K_{ij}$ are  kernels
of type 1, so that, by \cite{folland}, Proposition 1.11 we have
\begin{equation}\label{27.2:2}
|\scal{f_\ell}{(d_c \mc K\phi)_\ell}_{L^2(\he n)}| \le  C\|f \|_{L^1(\he{n},E_0^{n+1})}\|\phi\|_{L^{Q/2}(\he{n},E_0^n)}.
\end{equation}
The same argument can be carried out for all the components of $f$, yielding
\begin{equation}\label{27.2:3}
|\scal{f}{d_c \mc K\phi}_{L^2(\he n, E_0^{n+1})}| \le  C\|f \|_{L^1(\he{n},E_0^{n+1})}\|\phi\|_{L^{Q/2}(\he{n},E_0^n)}.
\end{equation}
%
Consider now the second term in \eqref{representation 3}. We have
\begin{equation*}\begin{split}
\scal{ u}{(d_c\delta_c)^2\mc K\phi}_{L^2(\he n,E_0^n)} &=
\scal{d_c \delta_c  u}{d_c\delta_c\mc K\phi}_{L^2(\he n,E_0^n)} 
\\&
= \scal{d_c g}{d_c\delta_c\mc K\phi}_{L^2(\he n,E_0^n)} .
\end{split}\end{equation*}
We notice now that $d_c g$ is a $d_c$-closed form in $E_0^n$, and then we can repeat the arguments
leading to \eqref{27.2:5} for $f$ in the case $h=n-1$, obtaining
\begin{equation}\label{eq g}
\big|\scal{d_c g}{d_c\delta_c\mc K\phi}_{L^2(\he n,E_0^n)} \big| \le \|d_cg\|_{L^1(\he n,E_0^n)}\|\nabla_{\he{}}d_c \delta_c\mc K\phi\|_{L^Q(\he n,E_0^n)}
\end{equation}
As above, $\nabla_{\he{}}d_c \delta_c \mc K\phi$ can be expressed as a sum of terms with components of the form
$$
\phi_j\ast W^I\tilde K_{ij}, \quad\mbox{with $d(I)=3$,}
$$
since $\delta_c :E_0^n\to E_0^{n-1}$ is an operator of order 1 in the horizontal derivatives,
as well as $d_c :E_0^{n-1}\to E_0^n$.
By Theorem \ref{global solution}, iv) and Proposition \ref{kernel}, ii) $W^I\tilde K_{ij}$ are  kernels
of type 1, so that, by \cite{folland}, Proposition 1.11 we have
\begin{equation*}
\big|\scal{d_c g}{d_c\delta_c\mc K\phi}_{L^2(\he n,E_0^n)} \big|
 \le  C\|d_c g \|_{L^1(\he{n},E_0^n)}\|\phi\|_{L^{Q/2}(\he{n},E_0^n)}.
\end{equation*}

Combining this estimate with the one in \eqref{27.2:3}, we get eventually
$$
| \scal{u}{\phi}_{L^2(\he n, E_0^n)} | \le C\big(\|f\|_{L^1(\he n, E_0^{n+1})} + 
\|d_cg\|_{L^1(\he n, E_0^n)} \big) \|\phi\|_{L^{Q/2}(\he n, E_0^n)},
$$
and hence
$$
\|u\|_{L^{Q/(Q-2)}(\he{n}, E_0^n)}\le C\big(\|f\|_{L^1(\he n, E_0^{n+1})} + 
\|d_cg\|_{L^1(\he n, E_0^n)} \big).
$$
To achieve the proof of statement iv) of the theorem we have to consider separately
the cases $f=0$ and $g=0$. Suppose $h=n+1$ (i.e. $g=0$). The proof for $h=n$
(i.e. $f=0$) follows by Hodge duality. 
In the case $h=n+1$ identity \eqref{representation 3} read as
\begin{equation*}
\begin{split}
\scal{u}{\phi}_{L^2(\he n,E_0^{n+1})} &= \scal{ u}{\Delta_{\he{},n+1}\mc K \phi}_{L^2(\he n,E_0^{n+1})}
= \scal{ u}{ (\delta_c d_c)^2 \mc K\phi}_{L^2(\he n,E_0^{n+1})}
\\&
 =\scal{ f}{ d_c\delta_cd_c\mc K\phi}_{L^2(\he n,E_0^{n+2})}.
\end{split}
\end{equation*}
Since $d_cf=0$, by Theorem \ref{pierre} we can apply Theorem \ref{chanillo_van}, and we get
\begin{equation*}
\begin{split}
\big| \scal{u}{\phi}_{L^2(\he n,E_0^{n+1})}\big| & \le
C \|f\|_{L^1(\he n,E_0^{n+2})}\|\nabla_{\he{}}  d_c\delta_c d_c\mc K\phi\|_{L^Q(\he n,E_0^{n+2})}
\\&
\le  C \|f\|_{L^1(\he n,E_0^{n+2})}\|\phi\|_{L^Q(\he n,E_0^{n+1})},
\end{split}
\end{equation*}
by \cite{folland}, Proposition 1.9, since $\nabla_{\he{}} d_c\delta_c d_c \mc K$ is a kernel of type 0.
Then we can conclude by duality as of the proof of case iii), achieving the proof of statement iv)
of the theorem that now is completely proved.

\end{proof}

\section{Final remarks} \label{final}
The estimates in Theorem \ref{Hn} for $n$-forms and $(n+1)$-forms can be reformulated in the spirit of the
estimates proved in \cite{BFTr}. To state our result, we must recall preliminarily  few definitions of the function spaces we need for our results.

If $p,q\in [1,\infty]$, we define the space
$$
L^{p,q}(\he n) := L^p(\he n)\cap L^q(\he n)
$$
endowed with the norm
$$
\| u\|_{L^{p,q}(\he n)} :=  (\|u\|_{L^p(\he n)} ^2+ \|u\|_{L^q(\he n)}^2)^{1/2}.
$$
We have:
\begin{itemize}
\item $L^{p,q}(\he n)$ is a Banach space;
\item  $\mc D(\he n)$ is dense in $L^{p,q}(\he n)$.
\end{itemize}
Again if $p,q\in [1,\infty]$, we can endow the vector space $L^p(\he n) + L^q(\he n)$ with the norm
\begin{equation*}\begin{split}
\| u  \|_{L^p(\he n) + L^q(\he n)} : = \inf
\{  & ( \|u_1\|_{L^p(\he n)}^2 + \|u_2\|_{L^q(\he n)}^2)^{1/2};  \; 
\\&
u_1\in L^p(\he n), u_2 \in L^q(\he n), u = u_1+u_2\}.
\end{split}\end{equation*}
We stress that $L^p(\he n) + L^q(\he n)\subset L^1_{\mathrm{loc}}(\he n)$. Analogous spaces
of forms can be defined in the usual way.

The following characterization of $(L^{p,q}(\he n))^*$ can be proved by standard arguments
of functional analysis.

\begin{proposition}\label{duality} If $p,q\in (1,\infty)$
 and $p',q'$ are their conjugate exponents, then
 \begin{itemize}
 
\item[i)] if $u=u_1+u_2\in L^{p'}(\he n) + L^{q'}(\he n)$, with
$u_1\in L^{p'}(\he n)$ and $u_2\in L^{q'}(\he n)$, then the map
\begin{equation*}
\phi \to \int_{\he n} (u_1\phi + u_2\phi)\, dp \quad\mbox{for $\phi\in L^{p,q}(\he n)$}
\end{equation*}
belongs to $(L^{p,q}(\he n))^*$ and $\|u\|_{L^{p'}(\he n) + L^{q'}(\he n)}\ge \|F\|$;

\item[ii)] if $u\in L^{p'}(\he n) + L^{q'}(\he n)$, then there exist
$u_1\in L^{p'}(\he n)$ and $u_2\in L^{q'}(\he n)$ such that $u=u_1+u_2$
and $\|u\|_{L^{p'}(\he n) + L^{q'}\he n)} =  ( \|u_1\|_{L^p(\he n)}^2 + \|u_2\|_{L^q(\he n)}^2)^{1/2} $.
Moreover the functional 
\begin{equation*}
\phi \to F(\phi):= \int_{\he n} (u_1\phi + u_2\phi)\, dp\quad\mbox{for $\phi\in L^{p,q}(\he n)$}
\end{equation*}
belongs to $(L^{p,q}(\he n))^*$ and $\|F\|\approx \|u\|_{L^{p'}(\he n) + L^{q'}(\he n)} $.

\item[iii)] reciprocally, if $F\in (L^{p,q}(\he n))^*$,  then there exist 
 $u_1\in L^{p'}(\he n)$ and $u_2\in L^{q'}(\he n)$ such that
\begin{equation*}
F(\phi) = \int_{\he n} (u_1\phi + u_2\phi)\, dp\quad\mbox{for all $\phi\in L^{p,q}(\he n)$.}
\end{equation*}
If we set $u:=u_1+u_2\in L^{p'}(\he n) + L^{q'}(\he n)$,
then  $\|u\|_{L^{p'}(\he n) + L^{q'}\he n)}= \|F\|$.

\item[iv)] if $u_0\in L^1_{\mathrm{loc}}(\he n)$ satisfies
$$
\left| \int_{\he n} u_0\phi\; dp \right|  \le C \|\phi \|_{ L^{p',q'}(\he n) }
$$
for some $C>0$ and for all $\phi\in \mc D(\he n)$, then $u_0\in L^{p'}(\he n) + L^{q'}(\he n)$
and 
$$\|u_0\|_{L^{p}(\he n) + L^{q}(\he n)} \le C.$$
\end{itemize}
\end{proposition}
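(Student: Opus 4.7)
The plan is to reduce everything to the classical duality between a Cartesian product of reflexive $L^p$ spaces and its dual, the link being the isometric diagonal embedding
\[
\iota: L^{p,q}(\he n)\to L^p(\he n)\times L^q(\he n),\qquad \iota(\phi)=(\phi,\phi),
\]
where the target is equipped with the $\ell^2$-product norm $\|(\phi_1,\phi_2)\|:=(\|\phi_1\|_{L^p(\he n)}^2+\|\phi_2\|_{L^q(\he n)}^2)^{1/2}$, whose restriction to the diagonal is exactly the norm of $L^{p,q}(\he n)$. Since $p,q\in(1,\infty)$, the dual of this product space is canonically $L^{p'}(\he n)\times L^{q'}(\he n)$ endowed with the companion $\ell^2$-dual norm $(\|u_1\|_{L^{p'}(\he n)}^2+\|u_2\|_{L^{q'}(\he n)}^2)^{1/2}$.

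Statement (i) is a direct application of H\"older's inequality followed by Cauchy--Schwarz: for any decomposition $u=u_1+u_2$ and any $\phi\in L^{p,q}(\he n)$,
\[
\Bigl|\int_{\he n}(u_1\phi+u_2\phi)\,dp\Bigr|\le \|u_1\|_{L^{p'}(\he n)}\|\phi\|_{L^p(\he n)}+\|u_2\|_{L^{q'}(\he n)}\|\phi\|_{L^q(\he n)}\le \bigl(\|u_1\|_{L^{p'}(\he n)}^2+\|u_2\|_{L^{q'}(\he n)}^2\bigr)^{1/2}\|\phi\|_{L^{p,q}(\he n)},
\]
so taking the infimum over all decompositions gives $\|F\|\le \|u\|_{L^{p'}(\he n)+L^{q'}(\he n)}$. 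Statement (iii) is proved by Hahn--Banach: the functional $(\phi,\phi)\mapsto F(\phi)$, defined on the closed subspace $\iota(L^{p,q}(\he n))\subset L^p\times L^q$, has norm equal to $\|F\|$; extending it to all of $L^p\times L^q$ without increasing the norm and then invoking the product-dual identification above produces $u_1\in L^{p'}(\he n)$ and $u_2\in L^{q'}(\he n)$ with $F(\phi)=\int(u_1+u_2)\phi\,dp$ and $(\|u_1\|_{L^{p'}}^2+\|u_2\|_{L^{q'}}^2)^{1/2}=\|F\|$. Combining this with (i) applied to $u:=u_1+u_2$ forces $\|F\|=\|u\|_{L^{p'}+L^{q'}}$.

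With (i) and (iii) established, (ii) is obtained by applying (iii) to the functional $F_u(\phi):=\int u\phi\,dp$ attached to a given $u\in L^{p'}(\he n)+L^{q'}(\he n)$: (iii) yields $v_1\in L^{p'}(\he n)$, $v_2\in L^{q'}(\he n)$ with $F_u(\phi)=\int(v_1+v_2)\phi\,dp$ for every $\phi\in L^{p,q}(\he n)$ and $(\|v_1\|_{L^{p'}}^2+\|v_2\|_{L^{q'}}^2)^{1/2}=\|F_u\|$. Testing against arbitrary $\phi\in\mc D(\he n)\subset L^{p,q}(\he n)$ shows $\int(u-v_1-v_2)\phi\,dp=0$ for every test function, hence $u=v_1+v_2$ a.e.; the norm identity then follows from the chain $\|u\|_{L^{p'}+L^{q'}}\le (\|v_1\|_{L^{p'}}^2+\|v_2\|_{L^{q'}}^2)^{1/2}=\|F_u\|\le \|u\|_{L^{p'}+L^{q'}}$, the last step being (i). Finally, (iv) is immediate from (iii) together with the density of $\mc D(\he n)$ in $L^{p',q'}(\he n)$ stated just before the proposition: the hypothesis extends $u_0$ to a bounded linear functional on all of $L^{p',q'}(\he n)$ of norm at most $C$, and (iii) applied with $(p,q)$ and $(p',q')$ interchanged represents this functional as $u_0=u_1+u_2\in L^p(\he n)+L^q(\he n)$ of norm at most $C$. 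The only delicate point is the $\ell^2$-style identification of $(L^p\times L^q)^*$ with $L^{p'}\times L^{q'}$, which is routine once the convention on the product norm is fixed; no serious obstacle arises.
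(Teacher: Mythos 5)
Your proof is correct and complete; the paper itself offers no argument for Proposition \ref{duality} (it is merely declared provable ``by standard arguments of functional analysis''), and your isometric diagonal embedding of $L^{p,q}(\he n)$ into the $\ell^2$-product $L^p(\he n)\times L^q(\he n)$, combined with Hahn--Banach and the product-dual identification, is precisely the canonical argument being alluded to, including the necessary $L^1_{\mathrm{loc}}$ identification step (testing against $\mc D(\he n)$) that turns the abstract representative into the given function in parts (ii) and (iv). You also correctly read part (iv): the paper's conclusion ``$u_0\in L^{p'}(\he n)+L^{q'}(\he n)$'' is a typo for $u_0\in L^{p}(\he n)+L^{q}(\he n)$, consistent with the displayed norm bound, and your application of (iii) with $(p,q)$ and $(p',q')$ interchanged handles exactly this.
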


Using the function spaces defined above, we can reformulate Theorem \ref{Hn} in the critical cases
$h=n$ as follows. Since a similar formulations lacks in \cite{BF7}, we state and prove the theorem also of $n=1$.

\begin{theorem}\label{Hn bis} Denote by $(E_0^*,d_c)$ the Rumin's complex in $\he n$, $n\geq 1$.
Consider the system
$$
\left\{\begin{aligned}
 d_c u = f \\ \delta_c u = g&
\end{aligned}
\right.
$$
\indent If $n\geq 2$, then there exists $C>0$ such that  
 \begin{align*} 
&\|u\|_{L^{Q/(Q-2)}(\he{n}, E_0^n)+ L^{Q/(Q-1)}(\he{n}, E_0^n)}\le   C\big( \|f \|_{L^1(\he{n}, E_0^{n+1})} + \|g \|_{L^1(\he{n}, E_0^{n-1})}\big) ;
\\
&\|u\|_{L^{Q/(Q-2)}(\he{n}, E_0^{n+1} ) + L^{Q/(Q-1)}(\he{n}, E_0^{n+1})}\le   C\big(\|f \|_{L^1(\he{n}, E_0^{n+2})} + \|g \|_{L^1(\he{n}, E_0^{n})}  \big) ,
\end{align*}
for any  $u\in \mc D(\he n, E_0^n)$ and for any 
$u\in \mc D(\he n, E_0^{n+1})$, respectively.

If $n=1$,
 \begin{align*} 
&\|u\|_{L^{Q/(Q-2)}(\he{1}, E_0^1)+ L^{Q/(Q-1)}(\he{1}, E_0^1)}\le   C\big( \|f \|_{L^1(\he{1}, E_0^{2})} + \|g \|_{\mc H^1(\he{1}, E_0^{0})}\big) ;
\\
&\|u\|_{L^{Q/(Q-2)}(\he{1}, E_0^{2} ) + L^{Q/(Q-1)}(\he{1}, E_0^2)}\le   C\big(\|f \|_{\mc H^1(\he{1}, E_0^{3})} + \|g \|_{L^1(\he{1}, E_0^{1})}  \big) ,
\end{align*}
for any  $u\in \mc D(\he 1, E_0^1)$ and for any 
$u\in \mc D(\he 1, E_0^{2})$, respectively.

\end{theorem}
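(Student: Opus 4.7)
The plan is to adapt the argument of case iv) of Theorem \ref{Hn} (Section \ref{analysis}), but to refrain from the second integration by parts that produced $d_c g$ on the right-hand side, accepting in exchange a weaker left-hand norm. By Proposition \ref{duality} (iv), applied componentwise to $u$, it will suffice to establish the duality estimate
$$
|\scal{u}{\phi}_{L^2(\he n, E_0^n)}| \le C\,\bigl(\|f\|_{L^1} + \|g\|_{\bullet}\bigr)\, \|\phi\|_{L^{Q/2, Q}(\he n, E_0^n)}
$$
for every test form $\phi \in \mc D(\he n, E_0^n)$, with $\bullet = L^1$ for $n \ge 2$ and $\bullet = \mc H^1$ for $n=1$. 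The estimate for $(n+1)$-forms will then follow by Hodge duality, exactly as in Section \ref{analysis}.

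Starting from the parametrix identity $u = \mc K \Delta_{\he{},n} u$ of Theorem \ref{global solution} together with $\Delta_{\he{},n} = \delta_c d_c + (d_c\delta_c)^2$, one integration by parts per summand yields
$$
\scal{u}{\phi} = \scal{f}{d_c \mc K \phi} + \scal{g}{\delta_c d_c \delta_c \mc K \phi}.
$$
For the $f$-term, I will rerun the argument of the case $h=n$ in Section \ref{analysis} verbatim: $d_c f = 0$ with $f \in E_0^{n+1}$, so Theorem \ref{pierre} (applied with $h = n+1 \ne n$) writes the components of $f$ in terms of $\he{}$-divergence-free horizontal vector fields, Theorem \ref{chanillo_van} gives the bound by $C\|f\|_{L^1}\|\nabla_{\he{}} d_c \mc K \phi\|_{L^Q}$, and the order count $-4+2+1=-1$ shows $\nabla_{\he{}} d_c \mc K$ is a kernel of type $1$, so that \cite{folland}, Proposition 1.11, produces $C\|f\|_{L^1}\|\phi\|_{L^{Q/2}}$.

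For the $g$-term with $n \ge 2$, I will exploit $\delta_c g = \delta_c^2 u = 0$, which makes $\ast g \in \Gamma(E_0^{n+2})$ a $d_c$-closed form. Using the intertwining $\ast \mc K_n = \mc K_{n+1} \ast$ together with $\ast \delta_c = \pm d_c \ast$, the pairing rewrites, up to sign, as
$$
\scal{g}{\delta_c d_c \delta_c \mc K \phi} = \pm\scal{\ast g}{d_c \delta_c d_c \mc K_{n+1}(\ast \phi)}_{L^2(\he n, E_0^{n+2})}.
$$
Since $n+2 \ne n$, Theorem \ref{pierre} and Theorem \ref{chanillo_van} will apply to the closed form $\ast g$, while $\nabla_{\he{}} d_c \delta_c d_c \mc K_{n+1}$ is a kernel of type $0$ (three horizontal derivatives against an order-$4$ parametrix), hence a Calder\'on--Zygmund operator bounded on $L^Q$ by \cite{folland}, Proposition 1.9; this produces $C\|g\|_{L^1}\|\phi\|_{L^Q}$.

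The hard part will be $n=1$, where the Hodge-duality trick collapses: $g \in E_0^0 = \R$ carries no closedness constraint, and Theorem \ref{pierre} is only formulated for $1 \le h \le 2n$, so it does not apply to $\ast g \in E_0^3$. Instead I will transfer the convolution representation of $\delta_c d_c \delta_c \mc K$---which a fresh order count in $\he 1$ identifies as kernels $K_j$ of type $1$---onto $g$ by duality,
$$
\scal{g}{\delta_c d_c \delta_c \mc K \phi} = \sum_j \scal{g \ast \ccheck K_j}{\phi_j}_{L^2(\he 1)},
$$
and invoke the Hardy--Sobolev estimate \cite{folland_stein}, Theorem 6.10, to obtain $\|g \ast \ccheck K_j\|_{L^{Q/(Q-1)}} \le C\|g\|_{\mc H^1}$; H\"older against $\phi \in L^Q$ then closes the argument. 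Combining with the $f$-estimate gives the required bound against the intersection norm $\|\phi\|_{L^{Q/2, Q}}$, and Proposition \ref{duality} (iv) concludes.
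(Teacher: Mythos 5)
Your proposal is correct and coincides with the paper's own proof in all essentials: the same parametrix--duality scheme with one integration by parts per summand of $\Delta_{\he{},n}=\delta_cd_c+(d_c\delta_c)^2$, Theorem \ref{pierre} plus Theorem \ref{chanillo_van} applied to the closed forms $f\in E_0^{n+1}$ and $\ast g\in E_0^{n+2}$, the same Folland kernel-type counts (type $1$ giving $L^{Q/2}\to L^Q$ via Proposition 1.11, type $0$ giving $L^Q\to L^Q$ via Proposition 1.9 --- note your parenthetical ``three horizontal derivatives'' omits the extra $\nabla_{\he{}}$ that makes the count $4-4=0$), the same $n=1$ fallback through a kernel of type $1$ and the Hardy estimate of \cite{folland_stein}, Theorem 6.10, and the conclusion via Proposition \ref{duality} (iv). The only cosmetic deviation is that you commute the parametrix with the Hodge star ($\ast\mc K_n=\mc K_{n+1}\ast$, true but left unjustified), whereas the paper sidesteps this by inserting $\ast$ as an isometry directly into the pairing, $\scal{g}{\delta_cd_c\delta_c\mc K\phi}=\scal{\ast g}{\ast\delta_cd_c\delta_c\mc K\phi}$.
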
 

\begin{proof} By Hodge duality we may
restrict ourselves to the case $h=n$. 

 If $u,\phi\in E_0^n$ are smooth compactly supported
forms, then we can write
\begin{equation}\label{representation 3bis}
\begin{split}
\scal{u}{\phi}_{L^2(\he n,E_0^n)} & = \scal{ u}{\Delta_{\he{},n}\mc K \phi}_{L^2(\he n,E_0^n)}
\\&
= \scal{ u}{(\delta_cd_c + (d_c\delta_c)^2)\mc K\phi}_{L^2(\he n,E_0^n)}
\\&
= \scal{ u}{\delta_c d_c\mc K\phi }_{L^2(\he n,E_0^n)} + \scal{ u}{(d_c\delta_c)^2)\mc K\phi}_{L^2(\he n,E_0^n)}
\\&
= \scal{d_c u}{d_c\mc K\phi }_{L^2(\he n,E_0^{n+1})} + \scal{ \delta_c u}{\delta_cd_c\delta_c\mc K\phi}_{L^2(\he n,E_0^{n-1})}
\\&
= \scal{f}{d_c\mc K\phi }_{L^2(\he n,E_0^{n+1})} + \scal{g}{\delta_cd_c\delta_c\mc K\phi}_{L^2(\he n,E_0^{n-1})}.
\end{split}
\end{equation}
The estimate of the first term of the last line of \eqref{representation 3bis} is already given in \eqref{27.2:3} and reads
\begin{equation}\label{27.2:3bis}
|\scal{f}{d_c \mc K\phi}_{L^2(\he n, E_0^{n+1})}| \le  C\|f \|_{L^1(\he{n},E_0^{n+1})}\|\phi\|_{L^{Q/2}(\he{n},E_0^n)},
\end{equation}
and, eventually,
\begin{equation}\label{27.2:6bis}\begin{split}
& |\scal{f}{d_c \mc K\phi}_{L^2(\he n, E_0^{n+1})}|
\\&
\hphantom{xxxx}
 \le  C\|f \|_{L^1(\he{n},E_0^{n+1})}
\left( \|\phi\|_{L^{Q/2}(\he{n},E_0^n)} +  \|\phi\|_{L^{Q}(\he{n},E_0^n)}\right).
\end{split}\end{equation}

Consider now the second term in the last line of \eqref{representation 3bis}. If $n\geq 2$, we have
\begin{equation}\label{last1}\begin{split}
 \scal{g}{\delta_cd_c\delta_c\mc K\phi}_{L^2(\he n,E_0^{n-1})} =
  \scal{\ast g}{\ast \delta_cd_c\delta_c\mc K\phi}_{L^2(\he n,E_0^{n+2})}
\end{split}\end{equation}
We notice now that $\ast g$ is a $d_c$-closed form in $E_0^{n+2}$. Then we can repeat the arguments
of the proof of Theorem \ref{Hn} and we get
\begin{equation}\label{eq g bis}\begin{split}
&\big|\scal{\ast g}{\ast \delta_c d_c\delta_c\mc K\phi}_{L^2(\he n,E_0^{n+2})} \big| 
\\&
\hphantom{xxxxx}
\le \|g\|_{L^1(\he n,E_0^{n-1})}\|\nabla_{\he{}}\delta_c d_c \delta_c\mc K\phi\|_{L^Q(\he n,E_0^n)}
\end{split}\end{equation}
As in the proof of Theorem \ref{Hn}, $\nabla_{\he{}}\delta_c d_c \delta_c \mc K\phi$ can be expressed as a sum of terms with components of the form
$$
\phi_j\ast W^I\tilde K_{ij}, \quad\mbox{with $d(I)=4$,}
$$
since $\delta_c :E_0^n\to E_0^{n-1}$ is an operator of order 1 in the horizontal derivatives,
as well as $d_c :E_0^{n-1}\to E_0^n$.
By Theorem \ref{global solution}, iv) and Proposition \ref{kernel}, ii) $W^I\tilde K_{ij}$ are  kernels
of type 0, so that, keeping in mind \eqref{last1}, by Proposition 1.9 we have
\begin{equation}\label{last}\begin{split}
&\big|\scal{ g}{\delta_c d_c\delta_c\mc K\phi}_{L^2(\he n,E_0^{n-1})} \big|
 \\&
\hphantom{xxxx}
 \le  C\| g \|_{L^1(\he{n},E_0^{n-1})}
\left( \|\phi\|_{L^{Q/2}(\he{n},E_0^n)} +  \|\phi\|_{L^{Q}(\he{n},E_0^n)}\right).
\end{split}\end{equation}

If $n=1$, we write instead
\begin{equation*}\begin{split}
 \scal{g}{\delta_cd_c\delta_c\mc K\phi}_{L^2(\he 1,E_0^{0})} =
  \scal{g\ast\tilde{K}}{\phi}_{L^2(\he 1,E_0^{0})},
\end{split}\end{equation*}
where $\tilde{K}$ is a kernel of type 1. By H\"older inequality and \cite{folland_stein}, Theorem 6.10,
\begin{equation*}\begin{split}
&|\scal{g\ast\tilde{K}}{\phi}_{L^2(\he 1,E_0^{0})}|
\\&\leq\|g\ast\tilde{K}\|_{L^{Q/(Q-1)}(\he 1,E_0^{0})}\|\phi\|_{L^{Q}(\he 1,E_0^{0})}\\
&\leq\|g\|_{\mc H^1(\he 1,E_0^{0})}\|\phi\|_{L^{Q}(\he 1,E_0^{0})}.
\end{split}
\end{equation*}
This yields
\begin{equation}\label{last34}\begin{split}
\scal{g}{\delta_cd_c\delta_c\mc K\phi}_{L^2(\he 1,E_0^{0})}
\leq\|g\|_{\mc H^1(\he 1,E_0^{0})}(\|\phi\|_{L^{Q}(\he 1,E_0^{0})}\|+\|\phi\|_{L^{Q/2}(\he 1,E_0^{0})}).
\end{split}\end{equation}

To conclude the proof, if $n>1$, combining \eqref{last} with \eqref{27.2:6bis} or \eqref{last34}, we get eventually
\begin{equation}\label{last2}\begin{split}
&\big|\scal{ g}{\delta_c d_c\delta_c\mc K\phi}_{L^2(\he n,E_0^{n-1})} \big|
 \le  C \left( \|f\|_{L^1(\he{n},E_0^{n+1})} + \| g \|_{L^1(\he{n},E_0^{n-1})}\right)
  \\&
\hphantom{xxxx}
\cdot
\left( \|\phi\|_{L^{Q/2}(\he{n},E_0^n)} +  \|\phi\|_{L^{Q}(\he{n},E_0^n)}\right).
\end{split}\end{equation}
If $n=1$ the same estimate holds with  $\| g \|_{L^1}$ replaced with $\| g \|_{\mc H^1}$.

Indeed,  if $n>1$ and we replace \eqref{27.2:6bis} and \eqref{last2} in \eqref{representation 3bis}, we obtain by duality
(Proposition \ref{duality} - iv)\;)
\begin{equation*}\begin{split}
& \|u\|_{L^{Q/(Q-2)}(\he{n}, E_0^n)+ L^{Q/(Q-1)}(\he{n}, E_0^n)}
\\&\hphantom{xxxxx}
\le   C\big( \|f \|_{L^1(\he{n}, E_0^{n+1})} + \|g \|_{L^1(\he{n}, E_0^{n-1})}\big).
\end{split}\end{equation*}
Again, $\| g \|_{L^1}$ must be replaced by $\| g \|_{\mc H^1}$ if $n=1$. This completes the proof of the theorem.

\end{proof}

\bibliographystyle{plain}

\bibliography{BFP_submitted}

\bigskip
\tiny{
\noindent
Annalisa Baldi and Bruno Franchi 
\par\noindent
Universit\`a di Bologna, Dipartimento
di Matematica\par\noindent Piazza di
Porta S.~Donato 5, 40126 Bologna, Italy.
\par\noindent
e-mail:
annalisa.baldi2@unibo.it, 
bruno.franchi@unibo.it.
}

\medskip

\tiny{
\noindent
Pierre Pansu 
\par\noindent Laboratoire de Math\'ematiques d'Orsay
\par\noindent UMR 8628 du CNRS
\par\noindent Universit\'e Paris-Sud
\par\noindent B\^atiment 425, Campus d'Orsay, 91405 Orsay, France.
\par\noindent
e-mail: pierre.pansu@math.u-psud.fr
}

\end{document}